\documentclass[12pt]{article}

\usepackage{graphicx}            
\usepackage{amsfonts}  
\usepackage[breaklinks]{hyperref}

\usepackage{tikz}
\usepackage{float}
\usetikzlibrary{patterns,arrows,decorations.pathreplacing}
\usetikzlibrary{decorations.markings}
\usetikzlibrary{calc,shapes}
\usepackage{calc}
\usepackage{ifthen}

\tikzstyle{red_vertex}=[circle,fill=red!25,minimum size=20pt,inner sep=0pt]
\tikzstyle{blue_vertex}=[circle,fill=blue!25,minimum size=20pt,inner sep=0pt]
\tikzstyle{vertex}=[circle,fill=black,minimum size=5pt,inner sep=0pt]
\tikzstyle{edge} = [draw,line width=2pt,-]
\tikzstyle{highlighted_edge} = [draw, line width =10pt,-]

\usepackage{pgfmath}
\pgfdeclarelayer{bg}    
\pgfsetlayers{bg,main}  

\newcommand{\s}[1]{\left\lvert #1 \right\rvert}

\newcommand{\eps}{\varepsilon}

\usepackage{amsmath,amsthm,amssymb,enumerate,mathrsfs,mathtools}
\usepackage{relsize}
\usepackage{a4wide}
\usepackage{verbatim}
\usepackage{xcolor}

\usepackage{placeins} 

\usepackage[old]{old-arrows} 

\usepackage{makecell}
\usepackage[numbers,sort&compress]{natbib}

\usepackage{enumitem}
\setlistdepth{9} 
\renewlist{enumerate}{enumerate}{9}
\setlist[enumerate,1]{label=(\arabic*)}
\setlist[enumerate,2]{label=(\Roman*)}
\setlist[enumerate,3]{label=(\Alph*)}
\setlist[enumerate,4]{label=(\roman*)}
\setlist[enumerate,5]{label=(\alph*)}
\setlist[enumerate,6]{label=(\arabic*)}
\setlist[enumerate,7]{label=(\Roman*)}
\setlist[enumerate,8]{label=(\Alph*)}
\setlist[enumerate,9]{label=(\roman*)}

\usepackage[capitalise]{cleveref}
\crefname{equation}{}{}
\crefname{prop}{Proposition}{Propositions}
\crefname{enumi}{}{}
\crefname{figure}{Figure}{Figures}

\usepackage[utf8]{inputenc}

\newtheorem{theorem}{Theorem}[section]
\crefname{theorem}{Theorem}{Theorems}
\newtheorem{lemma}[theorem]{Lemma}
\newtheorem{proposition}[theorem]{Proposition}
\newtheorem{corollary}[theorem]{Corollary}

\newtheorem{claim}[theorem]{Claim}

\theoremstyle{definition}

\numberwithin{equation}{section}

\DeclareMathOperator{\blue}{blue}
\DeclareMathOperator{\red}{red}
\def\al#1{}
	\renewcommand{\al}[1]{\footnote{\textbf{AL: }#1}}  
	
\def\vp#1{}
	\renewcommand{\vp}[1]{\footnote{\textcolor{green!40!black}{\textbf{VP: }#1}}} 
    
\newenvironment{marknew}{\color{red} }{ }
\newcommand{\mn}{\begin{marknew}}
\newcommand{\umn}{\end{marknew}}

\newenvironment{proofclaim}[1][Proof of Claim]{\begin{proof}[#1]}{\end{proof}}

\usepackage[UKenglish]{babel}
\usepackage[UKenglish]{isodate}

\newcommand{\splitatcommas}[1]{%
  \begingroup
  \ifnum\mathcode`,="8000
  \else
    \begingroup\lccode`~=`, \lowercase{\endgroup
      \edef~{\mathchar\the\mathcode`, \penalty0 \noexpand\hspace{0pt plus 1em}}%
    }\mathcode`,="8000
  \fi
  #1%
  \endgroup
}

\newcommand{\crel}[1]{%
  \global\setbox1=\hbox{$#1$}%
  \global\dimen1=0.5\wd1
  \mathrel{\hbox to\dimen1{$#1$\hss}}&\mathrel{\mspace{-\thickmuskip}\hbox to\dimen1{}}%
}

\interfootnotelinepenalty=10000

\def\COMMENT#1{}
\renewcommand{\COMMENT}[1]{\footnote{#1}}  

\newcommand{\EMAIL}[1]{\textit{{E-mail}}: \texttt{\href{mailto:#1}{#1}}} 

\usepackage{lineno}
\newcommand*\patchAmsMathEnvironmentForLineno[1]{%
  \expandafter\let\csname old#1\expandafter\endcsname\csname #1\endcsname
  \expandafter\let\csname oldend#1\expandafter\endcsname\csname end#1\endcsname
  \renewenvironment{#1}%
     {\linenomath\csname old#1\endcsname}%
     {\csname oldend#1\endcsname\endlinenomath}}%
\newcommand*\patchBothAmsMathEnvironmentsForLineno[1]{%
 \patchAmsMathEnvironmentForLineno{#1}%
  \patchAmsMathEnvironmentForLineno{#1*}}%
\AtBeginDocument{%
\patchBothAmsMathEnvironmentsForLineno{equation}%
\patchBothAmsMathEnvironmentsForLineno{align}%
\patchBothAmsMathEnvironmentsForLineno{flalign}%
\patchBothAmsMathEnvironmentsForLineno{alignat}%
\patchBothAmsMathEnvironmentsForLineno{gather}%
\patchBothAmsMathEnvironmentsForLineno{multline}%
}


\title{Almost partitioning every $2$-edge-coloured complete $k$-graph into~$k$ monochromatic tight cycles} 
\author{
Allan Lo 
\thanks{School of Mathematics, University of Birmingham, UK, \EMAIL{s.a.lo@bham.ac.uk}.} 
\and Vincent Pfenninger \thanks{Graz University of Technology, Institute of Discrete Mathematics, Steyrergasse 30, 8010 Graz, Austria, \EMAIL{pfenninger@math.tugraz.at}.}}
\date{\today}

\begin{document}

\maketitle

\begin{abstract}
    A \emph{$k$-uniform tight cycle} is a $k$-graph with a cyclic order of its vertices such that every~$k$ consecutive vertices from an edge. We show that for $k\geq 3$, every red-blue edge-coloured complete $k$-graph on~$n$ vertices contains~$k$ vertex-disjoint monochromatic tight cycles that together cover $n - o(n)$ vertices.
\end{abstract}

\section{Introduction}

Monochromatic partitioning is an area of combinatorics that has its origin in a remark of Gerencs\'er and Gy\'arf\'as~\cite{Gerencser1967} that any $2$-edge-colouring of the complete graph~$K_n$ contains a spanning path that consists of a red\footnote{We assume that the colours in a $2$-edge-colouring are always red and blue.} path followed by a blue path. In particular, every $2$-edge-coloured~$K_n$ admits a partition of its vertex set into a red path and a blue path. In a subsequent paper, Gy\'arf\'as~\cite{Gyarfas1983} proved the stronger result that every $2$-edge-coloured~$K_n$ contains a red cycle and a blue cycle that share at most one vertex and together cover all vertices. Lehel's conjecture states that in fact one can do even better. Lehel conjectured that every $2$-edge-coloured~$K_n$ can be partitioned into a red cycle and a blue cycle. Here the empty set, a single vertex, and a single edge are considered to be cycles. Lehel's conjecture was first proved for large enough~$n$ by {\L}uczak, Rödl and Szemer{\'e}di~\cite{Luczak1998} using the Regularity Lemma. Allen~\cite{Allen2008} later gave a different proof of this that does not use the Regularity Lemma, thus improving the bound on~$n$. Bessy and Thomass\'e~\cite{Bessy2010} finally gave a short and elegant proof of Lehel's conjecture for all~$n$.

Related problems have also been studied in the case where instead of~$K_n$ we have a graph~$G$ with a minimum degree condition~\cite{Balogh2014,DeBiasio2017,Letzter2019,Pokrovskiy2023,Korandi2021} or~$K_{n,n}$~\cite{Pokrovskiy2014,Stein2022,Gyarfas1973,Gyarfas1983}. For related problems with more colours see~\cite{Erdos1991,Gyarfas2006,Pokrovskiy2014,Pokrovskiy2023,Korandi2021}. See~\cite{Gyarfas2016} and Sections~7.4 and~9.7 of~\cite{Simonovits2019} for surveys on monochromatic partitioning.

Here our interest lies in generalisations of Lehel's conjecture to hypergraphs and tight cycles (for related problems in hypergraphs about other types of cycles see~\cite{gyarfas2013,Sarkozy2014,Bustamante2018}).
For $k \ge 2$, a \emph{$k$-graph} (or \emph{$k$-uniform hypergraph})~$H$ is a pair of sets $(V(H), E(H))$ such that $E(H) \subseteq \binom{V(H)}{k}$.\footnote{For a set~$X$, $\binom{X}{k}$ denotes the set of all subsets of~$X$ of size~$k$.}
A \emph{$k$-uniform tight cycle} is a $k$-graph with a cyclic order of its vertices such that its edges are exactly the sets of~$k$ consecutive vertices in the order. In this paper, any set of at most~$k$ vertices is also considered a tight cycle.

The problem we want to consider is how to cover almost all vertices of every $2$-edge-coloured complete $k$-graph with as few vertex-disjoint monochromatic tight cycles as possible. 
For $k =3$, Bustamante, H\`an and Stein~\cite{Bustamante2019} proved that in every $2$-edge-coloured complete $3$-graph almost all vertices can be partitioned into a red and a blue tight cycle. Subsequently, Garbe, Mycroft, Lang, Lo, and Sanhueza-Matamala~\cite{Garbe2023} showed that in fact there is a partition of all the vertices into two monochromatic tight cycles. Here it is necessary to allow the two monochromatic tight cycles to possibly have the same colour. 
Indeed for every $k \geq 3$, there exists a complete $k$-graph on arbitrarily many vertices that does not admit a partition of its vertices into a red and a blue tight cycle~\cite[Proposition~1.1]{Lo2023}. In an earlier paper~\cite{Lo2023}, the authors proved that for $k = 4$ it is also possible to almost partition the vertices of every $2$-edge-coloured $4$-graph into a red and a blue tight cycle. In the same paper, the authors also showed a weaker result for $k = 5$, that~$4$ vertex-disjoint monochromatic tight cycles suffice to cover almost all vertices of every $2$-edge-coloured complete $5$-graph. The only bound for general~$k$ that was known is given by a result of Bustamante, Corsten, Frankl, Pokrovskiy and Skokan~\cite{Bustamante2020}. They showed that every $r$-edge-coloured complete $k$-graph on~$n$ vertices can be partitioned into~$c(r,k)$ monochromatic tight cycles. However, the constant~$c(r,k)$ that can be obtained from their proof is very large. 
Indeed to cover almost all vertices, they simply repeatedly find a monochromatic tight cycle using the fact that the Ramsey number for the $k$-uniform tight cycle on~$N$ vertices is linear in~$N$.

Our aim here is to show a reasonable general bound on the number of tight cycles that are needed to almost partition a $2$-edge-coloured complete $k$-graph on $n$ vertices, that is, covering all but at most $\eps n$ vertices. 
A common approach to this problem is to apply a Ramsey-type result to find a long monochromatic tight cycle and then delete the vertex-set of this cycle and repeat. 
Allen, B\"ottcher, Cooley and Mycroft~\cite{Allen2017} showed that one can always find a monochromatic tight cycle on $( 1/2-o(1) ) n $ vertices. 
On the other hand, Alon, Frankl and Lov\'asz~\cite{AFL} showed that one can only guarantee a monochromatic matching of size at most~$n/(k+1)$ and so a monochromatic tight cycle on at most $kn/(k+1)$ vertices. 
Thus, this method always gives a number of tight cycles dependent on $\eps$, namely at least $\log(1/\eps)/\log(k+1)$ tight cycles.

In this paper, we give an upper bound on the number of cycles needed that is independent of~$\eps$. 
In particular, we show that $k$ tight cycles suffice.
 We remark that this result is probably not tight. The only known lower bound on the number of tight cycles needed is the trivial lower bound of~$2$.
Furthermore, throughout the paper, we assume that $k$ is fixed and $\eps$ is arbitrarily small. 
Otherwise, say when $k \ge \log (1/\eps)$, the greedy approach mentioned above is better. 

\begin{theorem} \label{thm:main}
    For every $\eps > 0$ and $k \geq 3$, there exists an integer~$n_0$ such that every $2$-edge-coloured complete $k$-graph on $n \geq n_0$ vertices contains~$k$ vertex-disjoint monochromatic tight cycles covering at least~$(1-\eps)n$ vertices.
\end{theorem}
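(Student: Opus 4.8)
The plan is to reduce the statement, via hypergraph regularity, to a purely combinatorial claim about $2$-edge-coloured \emph{complete} $k$-graphs, and then to prove that claim by induction on the uniformity $k$ using vertex links.

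First I would apply a hypergraph regularity lemma (in the regular-slice form, cf.\ Allen--B\"ottcher--Cooley--Mycroft~\cite{Allen2017} and the setup used in~\cite{Lo2023}) to the $2$-edge-coloured $K_n^{(k)}$, obtaining a reduced $k$-graph $R$ on a bounded number $t$ of clusters which is $2$-edge-coloured and in which all but $o(t^k)$ of the $k$-sets are ``dense'' edges of one colour. Using a connecting lemma together with a tight-cycle embedding (blow-up) lemma for regular complexes, it then suffices to find in $R$ at most $k$ \emph{pairwise vertex-disjoint} sets of clusters, each spanned by a matching that lies inside a single, suitably robustly tightly-connected, monochromatic tight component of $R$, whose union covers all but $o(t)$ clusters; each such ``connected matching'' blows up to a monochromatic tight cycle in $K_n^{(k)}$ (routing the connections through a small reserved set of clusters), and together these $k$ cycles cover $(1-\eps)n$ vertices.

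The combinatorial core is thus: every $2$-edge-coloured complete $k$-graph on $t$ vertices admits an almost-spanning partition into at most $k$ monochromatic (robustly) tightly-connected matchings. The structural starting point is that such a $k$-graph always has a monochromatic tight component spanning all but $o(t)$ of its vertices: for $k=2$ this is the classical fact that one colour class of $K_t$ is connected and spanning, and for $k\ge 3$ one passes to the link $L_v$ of a typical vertex $v$ --- a $2$-edge-coloured complete $(k-1)$-graph --- applies induction to get an almost-spanning monochromatic tight component $D$ of $L_v$, and notes that $D$ lifts (re-insert $v$ into every edge) to a monochromatic tight component of the original $k$-graph containing $V(D)\cup\{v\}$, since two edges of a tight walk in $L_v$ still meet in $k-1$ vertices after $v$ is added. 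The same link-induction drives the partition statement: from the $\le k-1$ monochromatic connected matchings in $L_v$ supplied by the inductive hypothesis one obtains $\le k-1$ lifted monochromatic tight components of $K_t^{(k)}$ whose vertex sets together cover $(1-o(1))t$ vertices, after which one must carve out of these components $\le k-1$ pairwise-disjoint matchings covering almost everything, leaving one extra connected matching to mop up the remainder --- $k$ in total. (The spanning-component lemma above is what lets one handle that remainder, since the complete $k$-graph induced on it again contains an almost-spanning monochromatic tight component.)

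I expect the main obstacle to be precisely this realisation step: turning the lifted components into disjoint, almost-spanning matchings. A monochromatic tight component need not carry a near-perfect matching on its vertex set --- the Alon--Frankl--Lov\'asz~\cite{AFL} constructions already force this --- so one has to choose which vertices to serve from which component and then actually find the matchings. I would phrase this as a system-of-distinct-representatives / fractional-matching problem, using the key flexibility that once an edge $e$ of the appropriate colour lies in a component $C$, every extension $\{w\}\cup e$ of that colour also lies in $C$ (all same-coloured edges through the $(k-1)$-set $e$ are pairwise tightly adjacent, hence lie in one component), so an edge with $\Omega(t)$ monochromatic extensions is ``flexible''; the few inflexible edges, together with any residual matching-deficiency, are what the single extra connected matching absorbs. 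The remaining work --- keeping the $k$ connected matchings vertex-disjoint, carrying the quantitative ``robust tight connectivity'' of the components through the induction so the embedding lemma applies, and bookkeeping the $o(t)$ and $o(n)$ errors (and the connection reservoir) --- is routine in principle but is where most of the technical effort will go.
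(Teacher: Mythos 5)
Your outer reduction (hypergraph regularity plus the connected-matching method, cf.\ the paper's \cref{cor:matchings_to_cycles}) and your almost-spanning-component observation (the paper's \cref{prop:Almost_spanning_MTC}) both match the paper. But your route to the core combinatorial lemma (\cref{lem:main}) is genuinely different and has a real gap. The paper does \emph{not} induct on $k$: it fixes the almost-spanning red component $F_*$, builds BFS layers $\mathcal{G}_1,\dots,\mathcal{G}_k$ in the auxiliary graph on monochromatic tight components (two components are adjacent if they have opposite colours and contain edges meeting in $k-1$ vertices), and greedily takes maximal matchings $M_1,\dots,M_k$ inside the successive layers while removing matched vertices. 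The engine is a separation lemma (\cref{lem:blue_tight_walk_2}): on any closed tight pseudo-walk, two red edges lying in different red components are separated by blue edges belonging to a \emph{single} blue component; this is proved by triangulating the walk into a plane graph (\cref{lem:triangulation}) and running a Hex-game argument (\cref{lem:grid_argument}). That lemma is what forces each $M_i$ to lie in one component of layer $\mathcal{G}_i$ (\cref{claim:One_MTC}), while \cref{claim:no_edges_far} forces the leftover after $k$ rounds to be $o(n)$. Your link-induction proposal has no analogue of this separation lemma.

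The step you yourself flag as the main obstacle is in fact where the argument breaks, and the single ``mop-up'' matching cannot repair it. After carving matchings out of the $\le k-1$ lifted components, the leftover vertex set $W$ has, a priori, no reason to be $o(t)$; and if $|W|=\Omega(t)$ then one connected matching in $H[W]$ covers at most $\tfrac{k}{k+1}|W|$ of its vertices (the Alon--Frankl--Lov\'asz bound you cite applies to $H[W]$ as well), leaving $\Omega(t)$ vertices uncovered. Your flexibility observation (all same-coloured extensions of a $(k-1)$-set lie in one tight component) is true and useful, but it does not by itself produce a partition of the vertices across the lifted components in which each part carries a near-perfect matching inside its component; such a partition need not exist, and the SDR/fractional relaxation has no mechanism to drive $|W|$ down. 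What is needed is a structural result that forces the greedy leftover to $o(t)$ in a number of rounds independent of $\eps$; in the paper that is precisely the role of \cref{lem:blue_tight_walk_2}, and your proof would need to either reprove it or replace it by a comparably strong statement about how monochromatic tight components interlock, which the single-vertex link does not capture.
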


Our proof is based on a hypergraph version of {\L}uczak's connected matching method (see~\cite{Luczak1999} for the original method). Roughly speaking the idea is as follows. Let~$K$ be a $2$-edge-coloured complete $k$-graph on~$n$ vertices. We apply the Regular Slice Lemma (a form of the Hypergraph Regularity Lemma that is due to Allen, B\"ottcher, Cooley and Mycroft~\cite{Allen2017}) to~$K^{\red}$.\footnote{For a $2$-edge-coloured $k$-graph~$H$, we denote by~$H^{\red}$ and~$H^{\blue}$ the subgraph induced by the red edges and the subgraph induced by the blue edges of~$H$, respectively.} 
Since any regularity partition for a $k$-graph is also a regularity partition for its complement, this gives rise to a reduced $k$-graph $\mathcal{R}$ that is a $2$-edge-coloured almost complete $k$-graph. 
So a red edge $i_1 \dots i_k$ in $\mathcal{R}$ means that~$K^{\red}$ is regular with respect to the corresponding clusters $V_{i_1}, \dots, V_{i_k}$ and at least half the edges of~$K$ with one vertex in each~$V_{i_j}$, $j \in [k]$ are red.\footnote{For $n \in \mathbb{N}$, we let $[n] = \{1, \dots, n\}$.} 
It can be shown that there is a red tight cycle in~$K$ that contains almost all of the vertices in $\bigcup_{j \in [k]} V_{i_j}$. The idea is now to combine a matching~$M$ of red edges in $\mathcal{R}$ into an even longer red tight cycle that covers almost all the vertices in the clusters that are covered by~$M$. 
However, in order for this to work we will need to be able to construct a red tight path\footnote{A \emph{tight path} is a $k$-graph with a linear order of its vertices such that every $k$-consecutive vertices form an edge. Or alternatively a tight path is a $k$-graph obtained by deleting a vertex from a tight cycle.} that goes from the clusters corresponding to one edge of~$M$ to the clusters corresponding to another edge of~$M$. So we require our red matching~$M$ in~$\mathcal{R}$ to be `connected' in some sense. 
To this end, we need the following definitions. A \emph{tight pseudo-walk of length~$m$ from~$e$ to~$e'$} in a $k$-graph~$H$ is a sequence of edges $e_1 \dots e_m$ in~$H$ such that $\s{e_i \cap e_{i+1}} \geq k-1$ for each $i \in [m-1]$ and $(e_1,e_m)=(e,e')$. 
A $k$-graph~$H$ is \emph{tightly connected} if for every pair of edges $e, e' \in H$, there is a tight pseudo-walk from~$e$ to~$e'$ in~$H$. 
A \emph{tight component} in a $k$-graph~$H$ is a maximal tightly connected subgraph of~$H$. 
Let~$H$ be a $2$-edge-coloured $k$-graph. 
A \emph{red or blue tight component} in~$H$ is a tight component in~$H^{\red}$ or in~$H^{\blue}$, respectively. 
A \emph{monochromatic tight component} in~$H$ is a red or a blue tight component in~$H$.
The hypergraph version of {\L}uczak's connected matching method that we need now roughly says the following. If $\mathcal{R}$ contains a matching that covers almost all vertices and uses edges from at most~$k$ monochromatic tight components, then there exists~$k$ monochromatic tight cycles in~$K$ that are vertex-disjoint and together cover almost all vertices. See \cref{cor:matchings_to_cycles} for the formal statement. The proof of \cref{thm:main} is now reduced to proving that every almost complete $2$-edge-coloured $k$-graph contains a matching that covers almost all vertices and only uses edges from at most~$k$ monochromatic tight components. This is \cref{lem:main} and is proved in \cref{section:matchings}. We deduce \cref{lem:main} from a novel structural lemma (\cref{lem:blue_tight_walk_2}) about the monochromatic tight components in any $2$-edge-coloured almost complete $k$-graph which we think might be of independent interest. This structural lemma states that if $e_1$ and $e_2$ are two red edges on the boundary of a blue tight component $B$ (meaning that there exist $f_1, f_2 \in B$ with $|e_1 \cap f_1| = |e_2 \cap f_2| = k-1$), then either $e_1$ and $e_2$ are in the same red tight component or every tight pseudo-walk from $e_1$ to $e_2$ contains an edge of $B$.

The remainder of the paper is organised as follows. In \cref{section:preliminaries}, we state the necessary definitions. In \cref{section:matchings}, we prove \cref{lem:main} which shows that $\mathcal{R}$ contains the desired matching that only uses edges from~$k$ monochromatic tight components. In \cref{section:end}, we show that \cref{thm:main} follows easily from \cref{lem:main} using the hypergraph version of {\L}uczak's connected matching method (\cref{cor:matchings_to_cycles}).

\section{Preliminaries} \label{section:preliminaries}
\subsection{Basics}
For $n \in \mathbb{N}$, we let $[n] = \{1, \dots, n\}$. 
Moreover, for a set~$X$ and $i \in \mathbb{N}$, we let $\binom{X}{i} = \{ S \subseteq X \colon \s{S} = i\}$. 
We abuse notation by sometimes writing $x_1 \dots x_j$ for $\{x_1, \dots, x_j\}$.

For a $k$-graph~$H$, we abuse notation by identifying~$H$ with its edge set~$E(H)$. For a set $W \subseteq V(H)$, we denote by~$H[W]$ the $k$-graph with $V(H[W]) = W$ and $E(H[W]) = \{e \in E(H) \colon e \subseteq W\}$. For a set $S \subseteq V(H)$ with $\s{S} \leq k-1$, we let $N_H(S) = \{S' \in \binom{V(H)}{k-\s{S}} \colon S \cup S' \in E(H)\}$ and $d_H(S) = \s{N_H(S)}$.
A $k$-graph~$H$ on~$n$ vertices is called \emph{$(\mu, \alpha)$-dense} if, for each $i \in [k-1]$, we have $d_H(S) \geq \mu \binom{n}{k-i}$ for all but at most $\alpha \binom{n}{i}$ sets $S \in \binom{V(H)}{i}$ and $d_H(S) = 0$ for all other sets~$S \in \binom{V(H)}{i}$.

When we say that a statement holds for constants~$a$ and~$b$ such that $0 < a \ll b <1$, we mean that the statement holds provided that~$a$ is chosen sufficiently small in terms of~$b$. More precisely, we mean that there exists a non-decreasing function $f \colon (0,1) \rightarrow (0,1)$ such that the statement holds for all $a, b \in (0,1)$ with $a \leq f(b)$. Similar \emph{hierarchies} with more variables are defined analogously. Moreover, if~$1/n$ appears in such a hierarchy then we implicitly assume that~$n$ is a positive integer.

\subsection{Tight pseudo-walks and triangulations}

Recall that a \emph{tight pseudo-walk (of length~$m$)} in a $k$-graph~$H$ is a sequence of edges $e_1 \dots e_m$ in~$H$ such that $\s{e_i \cap e_{i+1}} \geq k-1$\footnote{We use the condition $\s{e_i \cap e_{i+1}}\geq k-1$ rather than $\s{e_i \cap e_{i+1}} = k-1$ to allow $e_i = e_{i+1}$.} for every $i \in [m-1]$. A \emph{closed tight pseudo-walk} is a tight pseudo-walk $e_1 \dots e_m$ such that $\s{e_1 \cap e_m} \geq k-1$. 

A \emph{plane graph} is a planar graph together with an embedding in the plane $\mathbb{R}^2$. A \emph{nearly triangulated plane graph} is a plane graph such that every face except possibly the outer face is a triangle. A \emph{walk} in a plane graph~$D$ is a sequence $x_1 \dots x_m$ of vertices of~$D$ such that $x_i x_{i+1} \in E(D)$ for each $i \in [m-1]$. All graphs and in particular all plane graphs in this paper are finite. 

Given a closed tight pseudo-walk $Q = e_1 \dots e_m$ in a $k$-graph~$H$, we say that a nearly triangulated plane graph~$D$ is a \emph{triangulation} of~$Q$ if there exists a map $\varphi \colon V(D) \rightarrow E(H)$ such that if $xy \in E(D)$, then $\s{\varphi(x) \cap \varphi(y)} \geq k-1$ and the outer face of~$D$ is $x_1 \dots x_m$ (in order) where $\varphi(x_i) = e_i$ for each $i \in [m]$. Note that in this setting, any walk in~$D$ corresponds to a tight pseudo-walk in~$H$.

\section{A large matching using edges from few monochromatic tight components} \label{section:matchings}

The aim of this section is to prove the following lemma which says that in any $2$-edge-coloured $(1-\eps, \eps)$-dense $k$-graph we can find a matching that covers almost all vertices and only contains edges from at most~$k$ monochromatic tight components. 

\begin{lemma} \label{lem:main}
    Let $1/n \ll \eps \ll \eta \ll 1/k \leq 1/2$. Let~$H$ be a $2$-edge-coloured $(1-\eps, \eps)$-dense $k$-graph on~$n$ vertices. Then there exists a matching in~$H$ that covers all but at most $\eta n$ vertices of~$H$ and only contains edges from at most~$k$ monochromatic tight components of~$H$.
\end{lemma}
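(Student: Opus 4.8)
The plan is to prove the lemma in two stages. First, I would establish a structural dichotomy: in any $2$-edge-coloured $(1-\eps,\eps)$-dense $k$-graph $H$, the \emph{large} monochromatic tight components have a very restricted intersection pattern. Roughly, one expects a ``Gallai-type'' picture: either there is a single colour class (say red) whose tight components together cover essentially all the ``robust'' $(k-1)$-sets, or the blue tight components dominate. More precisely, I would use the structural lemma advertised in the introduction (\cref{lem:blue_tight_walk_2}): if $e_1,e_2$ are red edges on the boundary of a blue tight component $B$, then either they lie in a common red tight component or every tight pseudo-walk between them meets $B$. Iterating this boundary argument over all pairs of large components should force that the set of large monochromatic tight components has a tree-like (in fact, path-like or star-like) overlap structure, so that one can select at most $k$ of them whose union of vertex sets is all but $o(n)$ vertices.

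Second, given such a family $\mathcal{C}_1,\dots,\mathcal{C}_t$ of at most $k$ monochromatic tight components covering all but $\eta n/2$ vertices, I would extract from each a near-perfect matching on the vertices it covers. Here I would use $(1-\eps,\eps)$-density inside a tight component: a dense tightly connected $k$-graph on $m$ vertices contains a matching covering all but $O(\eps^{1/2} m)$ of its vertices — this is a standard absorption/greedy argument (repeatedly pick an edge avoiding the few ``bad'' low-degree $(k-1)$-sets; near the end, clean up using the density at level $i=1$). Taking the union of these matchings across the $\le k$ chosen components, and being slightly careful that the components may overlap (in which case I assign each vertex to one component and restrict the matching accordingly, losing only $O(k\eps^{1/2}n)$ more vertices), yields a matching covering all but $\eta n$ vertices using edges from at most $k$ monochromatic tight components.

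The main obstacle is the first stage: showing that at most $k$ large monochromatic tight components suffice to cover $n-o(n)$ vertices. The number $k$ is exactly the Alon–Frankl–Lovász bound, and the naive fact that each large component covers a constant fraction of vertices only gives $O_\eps(1)$ components. The key is to rule out having many ``independent'' large components by a counting/boundary argument: if there were $k+1$ pairwise-``far'' large components, one could build a large monochromatic matching contradicting a fractional version of the Alon–Frankl–Lovász theorem, or one could use \cref{lem:blue_tight_walk_2} to derive a contradiction with tight connectivity of the relevant colour class on the bulk of the vertices. I would set up the argument by first discarding all vertices lying in $(k-1)$-sets of low degree (there are at most $k\eps n$ such vertices since $H$ is $(1-\eps,\eps)$-dense), then work inside the resulting ``robust'' part where the colouring of $k$-sets is total and every $(k-1)$-set has near-full degree; on this part a clean Gallai-type structure should be forced.

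A secondary technical point is bookkeeping the hierarchy $1/n \ll \eps \ll \eta \ll 1/k$: each of the $O(k)$ cleanup steps above loses a term that is $\mathrm{poly}(\eps)\cdot n$ or $k\eps n$, and we need the total loss to be below $\eta n$, which holds since $\eps \ll \eta$ and $k$ is fixed. I would carry the constants symbolically and collect them at the end rather than optimising.
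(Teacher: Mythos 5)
Your proposal follows a different plan from the paper's, and both of your two stages have genuine gaps.

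Your Stage~1 asks for a fixed family of at most $k$ ``large'' monochromatic tight components that together span $n-o(n)$ vertices, forced by a Gallai-type structure derived from \cref{lem:blue_tight_walk_2}. This is speculative: nothing in \cref{lem:blue_tight_walk_2} (which is a statement about a \emph{single} closed pseudo-walk) obviously yields a tree-like or star-like overlap structure among all large components, and I don't see how ``contradicting a fractional Alon--Frankl--Lov\'asz theorem'' could help, since that theorem bounds monochromatic matchings from \emph{above}, so having many large components does not clash with it. The $V_1,\dots,V_\ell$ example in the paper (edges blue iff they meet some $V_i$ in more than $k/2$ vertices) shows that you cannot hope to cover almost all $(k-1)$-sets with $O_k(1)$ components, which is at least a warning sign that a clean Gallai picture may not exist. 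What \emph{is} true and easy (\cref{prop:Almost_spanning_MTC}) is that a single monochromatic tight component spans $(1-\sqrt\eps)n$ vertices; but ``spanning'' means each vertex is on \emph{some} edge of the component, which is far weaker than what your Stage~2 needs.

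Your Stage~2 does not work even granting Stage~1. You propose to partition the vertex set among the chosen components $\mathcal{C}_1,\dots,\mathcal{C}_t$ and find a near-perfect matching inside each restricted component. But restricting a tight component to a vertex subset can destroy all its edges: a component $\mathcal{C}_2$ might span $A\cup C$ only via edges that all touch $A$, so after you assign $A$ to $\mathcal{C}_1$ there may be \emph{no} edges of $\mathcal{C}_2$ left inside $C$. There is no reason the induced edge set $\mathcal{C}_j[W]$ is dense or even nonempty, so the claimed $O(\eps^{1/2}m)$ loss per component is unjustified. The paper avoids this entirely by not pre-committing to any $k$ components: it does a BFS layering $\mathcal{G}_1,\mathcal{G}_2,\dots$ of the component-adjacency graph starting from one spanning component $F_*$, and at step $i$ takes a \emph{maximal} matching $M_i$ in $H[W_{i-1}]\cap \mathcal{E}(\mathcal{G}_i)$ among the residual vertices $W_{i-1}$. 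Maximality gives $H[W_i]\cap\bigcup_{j\le i}\mathcal{E}(\mathcal{G}_j)=\varnothing$, which is exactly what the triangulation/Hex argument (\cref{lem:grid_argument}, \cref{lem:blue_tight_walk_2}) needs to force each $M_i$ into a single tight component, and a short counting argument (\cref{claim:no_edges_far}) bounds the number of layers needed by $k$. So the selection of the $\le k$ components is an output of the peeling process, not an input to it. If you want to salvage your plan, you would essentially have to rediscover this interleaving of matching extraction with component selection; the ``select components, then match'' order is the fundamental obstruction.
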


The case $k=3$ is already proved in~\cite{Bustamante2019} (in which they showed that a red and a blue tight component suffice).
The first step of the proof is to find a red and a blue tight component~$R$ and~$B$, respectively, of~$H$ such that almost all $2$-subsets of~$V(H)$ are contained in some edge of $R \cup B$. 
One then finds a large matching in~$R \cup B$.
Thus a natural first step of proving Lemma~\ref{lem:main} is to find a constant number of monochromatic tight components of~$H$, such that almost all $(k-1)$-subsets of~$V(H)$ are contained in some edge of these tight components. 
However this is not possible when $k \ge 4$ as shown by the following example. 
Let $V_1, \dots, V_{\ell}$ be an equipartition of a set of $n$ vertices. 
Consider the $2$-edge-coloured complete $k$-graph on $\bigcup_{i \in [\ell]}V_i$ such that an edge~$e$ is blue if $|e \cap V_i| > k/2$, and red otherwise. 
Observe that there are $\ell$ blue tight components. 
Moreover, each $\binom{V_i}{k-1}$ has to be ``covered'' by a distinct blue tight component.

Instead, we will enlarge our maximal matching as we choose tight components as described in the following proof sketch. 
Consider a monochromatic tight component~$F_*$ in~$H$ and let $\mathcal{G}_1 = \{F_*\}$.
We say that two monochromatic tight components~$F_1$ and~$F_2$ of~$H$ are \emph{adjacent} if they have opposite colours and there are edges $e_1 \in F_1$ and $e_2 \in F_2$ such that $|e_1 \cap e_2| = k-1$.
Now for each $i \geq 2$ in turn, let $\mathcal{G}_i$ be the set of monochromatic tight components that are adjacent to a monochromatic tight component in $\mathcal{G}_{i-1}$ and not already in $\bigcup_{j \in [i-1]} \mathcal{G}_j$. 
Moreover, for each $i \geq 1$, we let $\mathcal{E}(\mathcal{G}_i) = \bigcup_{F \in \mathcal{G}_i} F$, that is, $\mathcal{E}(\mathcal{G}_i)$ is the set of edges that are in some monochromatic tight component $F \in \mathcal{G}_i$.
It is easy to see that all edges of~$H$ are in $\bigcup_{j \in [2k]} \mathcal{E}(\mathcal{G}_j)$ (see Proposition~\ref{prop:tight_walk_bridge}).
In fact, if $F_*$ spans almost all vertices (such an $F_*$ exists by \cref{prop:Almost_spanning_MTC}), then almost all edges of~$H$ are in~$\bigcup_{j \in [k]} \mathcal{E}(\mathcal{G}_j)$.
For simplicity we assume that $H = \bigcup_{j \in [k]} \mathcal{E}(\mathcal{G}_j)$.

We now set $W_0 = V(H)$ and for each $i = 1, \dots, k$ in turn, we let~$M_i$ be a maximal matching in $H[W_{i-1}]\cap \bigcup_{j \in [i]} \mathcal{E}(\mathcal{G}_j)$ and $W_i = W_{i-1} \setminus V(M_i)$.
Since $\bigcup_{j \in [k]} \mathcal{E}(\mathcal{G}_j) = H$ is $(1-\eps, \eps)$-dense, $\bigcup_{j \in [k]} M_j$ is a maximal matching in~$H$ covering almost all vertices of~$H$.

It remains to show that $\bigcup_{j \in [k]} M_j$ is contained in at most $k$ monochromatic tight components. 
Note that, for each $i \in [k+1]$,
\begin{align} \label{no_edges_in_W_i}
    H[W_{i-1}] \cap \bigcup_{j \in [i-1]} \mathcal{E}(\mathcal{G}_j) = \varnothing
\end{align}
by our choices of~$M_{i-1}$ and~$W_{i-1}$. 
Hence $M_i \subseteq H[W_{i-1}] \cap \mathcal{E}(\mathcal{G}_i)$. 
Therefore, it suffices to show that $H[W_{i-1}]\cap \mathcal{E}(\mathcal{G}_i)$ (and so $M_i$) consists of edges from one monochromatic tight component.

Suppose for a contradiction that there are two edges~$e_1$ and~$e_2$ in $H[W_{i-1}]\cap \mathcal{E}(\mathcal{G}_i)$ that are in different monochromatic tight components. 
Suppose further that $|W_{i-1}| \ge \eta n$ (or else $\bigcup_{j \in [i-1]} M_j$ is already an almost perfect matching).
Since each component of $\mathcal{G}_{i}$ is adjacent to some components of~$\mathcal{G}_{i-1}$
and $\mathcal{E}(\mathcal{G}_i)$ is monochromatic, all edges in $\mathcal{E}(\mathcal{G}_i)$ (including $e_1$ and $e_2$) have the same colour, say red. 
In~$H[W_{i-1}]$, there exists a tight pseudo-walk~$P$ from~$e_1$ to~$e_2$. 
Recall that $\bigcup_{j \in [i]} \mathcal{E}(\mathcal{G}_j)$ is tightly connected, so $\bigcup_{j \in [i]} \mathcal{E}(\mathcal{G}_j)$ contains a tight pseudo-walk~$P'$ from~$e_2$ to~$e_1$.
Thus $PP'$ (the concatenation of $P$ and $P'$)  is a closed tight pseudo-walk.
We then define a nearly triangulated plane graph~$D$ such that every vertex of~$D$ corresponds to an edge in~$H$, $PP'$ is on the outer face and any walk in~$D$ corresponds to a tight pseudo-walk in~$H$.
We colour each vertex of~$D$ with the same colour of its corresponding edge in~$H$. 
Since~$e_1$ and~$e_2$ are not in the same monochromatic tight component, there is no red walk in~$D$ from $e_1$ to~$e_2$.
By adapting the proof of Gale~\cite{Gale1979} of the fact that the Hex game cannot end in a draw, one deduces that $H$ contains a blue tight pseudo-walk~$P^*$ from an edge of~$P$ to an edge of~$P'$.
Since $P'$ is contained in~$\bigcup_{j \in [i]} \mathcal{E}(\mathcal{G}_j)$, we have $P^* \subseteq \bigcup_{j \in [i-1]} \mathcal{E}(\mathcal{G}_j)$.
Therefore, $ \varnothing \ne P \cap P^* \subseteq H[W_{i-1}] \cap \bigcup_{j \in [i-1]} \mathcal{E}(\mathcal{G}_j)$ contradicting~\eqref{no_edges_in_W_i}.

We will need the fact that in any not too small induced subgraph of a $(1-\eps, \eps)$-dense $k$-graph there is a short tight pseudo-walk between any two edges.

\begin{proposition} \label{prop:tight_walk_bridge}
    Let $1/n \ll \eps \ll 1/k \leq 1/2$ and let~$H$ be a $(1-\eps, \eps)$-dense $k$-graph on~$n$ vertices and let $W \subseteq V(H)$ be such that $\s{W} > 2\eps n$. Let $e_1, e_2 \in H[W]$. Then there exists a tight pseudo-walk of length~$2k$ from~$e_1$ to~$e_2$ in~$H[W]$.
\end{proposition}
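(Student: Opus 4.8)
The plan is to reduce the existence of a short tight pseudo-walk to iteratively extending a common $(k-1)$-set using the $(1-\eps,\eps)$-density hypothesis, working entirely inside $H[W]$. The key point is that if $e_1,e_2 \in H[W]$ then each contains a $(k-1)$-subset, and because $|W| > 2\eps n$ is not too small, "most" $(k-1)$-subsets of $W$ have large degree into $W$; so from any fixed $(k-1)$-set $S$ with a neighbour in $W$ we can reach a large family of edges, and then reach almost any other $(k-1)$-set in one more step. Concretely, I would pick an arbitrary $(k-1)$-subset $S_1 \subseteq e_1$ and an arbitrary $(k-1)$-subset $S_2 \subseteq e_2$, and aim to build a tight pseudo-walk of length roughly $k$ from $e_1$ to $e_2$ realising the "slide" from $S_1$ to $S_2$: at each step we remove one vertex of $S_1$ and add one vertex of $S_2$.

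First I would record the density consequence restricted to $W$: since $H$ is $(1-\eps,\eps)$-dense, for each $i \in [k-1]$ all but at most $\eps\binom{n}{i}$ sets $S \in \binom{V(H)}{i}$ satisfy $d_H(S) \ge (1-\eps)\binom{n}{k-i}$, and the rest have $d_H(S)=0$. Because $|W| > 2\eps n$ and $\eps \ll 1/k$, one checks that $\binom{|W|}{i} > \eps\binom{n}{i}$ for $i \le k-1$, so there is at least one (in fact most) $i$-set inside $W$ with full degree; more usefully, for any fixed $(k-1)$-set $S$ with $d_H(S) \ge (1-\eps)\binom{n}{k-1}$, the number of vertices $v \in W$ with $S \cup \{v\} \in E(H)$ is at least $|W| - \eps n > \eps n$, and similarly for smaller sets one gets that the "bad" sets form a vanishing fraction of $\binom{W}{i}$. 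The upshot I want is: given any $(k-1)$-set $S \subseteq W$ that lies in some edge of $H[W]$ (equivalently $d_{H[W]}(S) \ge 1$), and given any target vertex-set, I can find a vertex $v \in W$ avoiding a prescribed $o(n)$-sized forbidden set such that $S \cup \{v\} \in E(H)$, because $d_H(S) > 0$ forces $d_H(S) \ge (1-\eps)\binom{n}{k-1}$ hence $S$ has $\ge |W| - \eps n$ neighbours in $W$.

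Second, I would run the slide. Write $e_1 = \{a_1,\dots,a_k\}$ with $S_1 = \{a_1,\dots,a_{k-1}\}$ and $e_2 = \{b_1,\dots,b_k\}$ with $S_2 = \{b_1,\dots,b_{k-1}\}$; we may assume these sets are disjoint (if not the walk is only shorter). For $t = 0,1,\dots,k-1$ consider the $(k-1)$-set $T_t = \{b_1,\dots,b_t,a_{t+1},\dots,a_{k-1}\}$, so $T_0 = S_1$ and $T_{k-1} = S_2 \setminus\{b_{k-1}\}$ — actually it is cleaner to let $T_t = \{b_1,\dots,b_t,a_{t+1},\dots,a_{k-1}\}$ and note $T_k$ is not defined; instead use $T_0=S_1,\ T_1,\dots,T_{k-1}$ with $T_{k-1}\subseteq e_2$. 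At step $0$ we have the edge $e_1 \supseteq T_0$. Inductively, having an edge $f_t \supseteq T_t$ in $H[W]$, note $d_{H[W]}(T_t) \ge 1$ so in fact $T_t$ has at least $|W|-\eps n > \eps n$ neighbours in $W$; pick such a neighbour $v$ avoiding the (constant-sized) set of vertices used so far and not in $S_1 \cup S_2$ when possible — for the very last couple of steps choose $v$ to be the appropriate remaining vertex of $e_2$, which is legitimate precisely because a neighbour exists in that full-degree case. This gives $f_{t+1} = T_t \cup \{v\}$ with $|f_t \cap f_{t+1}| \ge k-1$ (they share $T_t$). After at most $k-1$ such steps we reach an edge containing $S_2$; one more step of the same kind lands on $e_2$ itself (adding $b_k$ to $S_2$, which is possible since $d_H(S_2)>0$). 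Concatenating $e_1 f_1 f_2 \cdots e_2$ and padding with repeated edges to reach length exactly $2k$ (allowed since the definition of tight pseudo-walk permits $e_i = e_{i+1}$) yields the claim.

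The main obstacle is the bookkeeping at the final steps: once we need to arrive at the specific edge $e_2$ rather than just any edge containing $S_2$, we cannot freely choose the added vertex, so we must verify that the relevant $(k-1)$-sets (namely $S_2$ and the $T_t$'s with $t$ close to $k-1$) actually have positive degree in $H$, which we get for free since they lie in an honest edge of $H[W]$ ($e_2$ or an $f_t$ we already constructed), and $(1-\eps,\eps)$-density is a dichotomy: degree $0$ or degree $\ge (1-\eps)\binom{n}{k-i}$. The only quantitative check needed is that at every one of the $\le 2k$ steps the forbidden set (at most $2k$ vertices) is smaller than the number of available neighbours in $W$, i.e. $2k < |W| - \eps n$, which holds since $|W| > 2\eps n$ and $1/n \ll \eps \ll 1/k$. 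No other estimate is delicate, so I would keep the write-up short.
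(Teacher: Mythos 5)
Your slide from $S_1$ to $S_2$ has a genuine gap, and it is precisely the point where the student note hand-waves ("which is legitimate precisely because a neighbour exists in that full-degree case"). To maintain the inductive invariant $f_{t+1} \supseteq T_{t+1}$ you must take $v = b_{t+1}$ at step $t$, since $T_{t+1} \setminus T_t = \{b_{t+1}\}$ and $f_{t+1} = T_t \cup \{v\}$. But $(1-\eps,\eps)$-density only tells you that $T_t$ (if contained in an edge) has at least $(1-\eps)n$ neighbours; it gives no control over whether the \emph{particular} vertex $b_{t+1}$ is among them. Conversely, if you take a generic neighbour $v$ avoiding $S_1 \cup S_2$ as you suggest, then $b_{t+1} \notin f_{t+1}$, so $T_{t+1} \not\subseteq f_{t+1}$ and the induction breaks immediately. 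Relatedly, your claim that positive degree of the $T_t$'s comes "for free since they lie in an honest edge of $H[W]$" is circular for $0 < t < k-1$: such $T_t$ mixes vertices of $e_1$ and $e_2$ and lies in neither $e_1$, nor $e_2$, nor any $f_j$ you have actually constructed (unless you already managed to add the $b_j$'s). Density permits up to $\eps\binom{n}{k-1}$ degree-zero $(k-1)$-sets, and nothing prevents a specific $T_t$ from being one of them.

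The paper sidesteps this by never asking any prescribed vertex to be a neighbour. It grows a common spine $z_1, \dots, z_{k-1}$ simultaneously from \emph{both} ends, choosing $z_i \in N_H(z_1 \dots z_{i-1} x_i \dots x_{k-1}) \cap N_H(z_1 \dots z_{i-1} y_i \dots y_{k-1}) \cap W$. Both $(k-1)$-sets in this intersection are subsets of edges already constructed ($f_{i-1}$ and $f'_{i-1}$), so both have large degree and a common $z_i \in W$ exists by a union bound using $|W| > 2\eps n$. The two arms $e_1 f_1 \cdots f_{k-1}$ and $e_2 f'_1 \cdots f'_{k-1}$ then meet at $f_{k-1}$ and $f'_{k-1}$, which share $z_1 \dots z_{k-1}$, yielding the length-$2k$ pseudo-walk. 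This bilateral construction is the key idea your one-sided slide is missing, and a one-sided version cannot be repaired within the given density hypothesis.
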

\begin{proof}
    Let $e_1 = x_1 \dots x_k$ and $e_2 = y_1 \dots y_k$. For each $i \in [k-1]$ (in order), let
    \[
        z_i \in N_H(z_1 \dots z_{i-1} x_i \dots x_{k-1}) \cap N_H(z_1 \dots z_{i-1} y_i \dots y_{k-1}) \cap W, 
    \]
    \[
        f_i = z_1 \dots z_i x_i \dots x_{k-1} \text{ and } f'_i = z_1 \dots z_i y_i \dots y_{k-1}.
    \]
    Since~$H$ is $(1-\eps, \eps)$-dense, we have, for every $i \in [k-1]$, 
    \[
        \s{N_H(z_1 \dots z_{i-1} x_i \dots x_{k-1}) \cap N_H(z_1 \dots z_{i-1} y_i \dots y_{k-1}) \cap W} \geq \s{W} - 2\eps n > 0.
    \]
    Thus the~$z_i$ exist. Now $e_1 f_1 \dots f_{k-1} f'_{k-1} \dots f'_1 e_2$ is a tight pseudo-walk of length~$2k$ in~$H$ from~$e_1$ to~$e_2$.
\end{proof} 

Next we show that for any closed tight pseudo-walk~$Q$ in a $(1-\eps, \eps)$-dense $k$-graph~$H$, we can find a nearly triangulated plane graph~$D$ that is a triangulation of~$Q$.

\begin{lemma} \label{lem:triangulation}
    Let $1/n \ll \eps \ll 1/k \leq 1/2$ and let~$H$ be a $(1-\eps, \eps)$-dense $k$-graph on~$n$ vertices. Let $Q = e_1 \dots e_m$ be a closed tight pseudo-walk in~$H$ (of any length~$m$). Then there exists a (finite) nearly triangulated plane graph~$D$ that is a triangulation of~$Q$.
\end{lemma}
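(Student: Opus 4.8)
The plan is to build the triangulation $D$ incrementally, triangulating the disk bounded by the closed tight pseudo-walk $Q = e_1 \dots e_m$ one "ear" at a time, using \cref{prop:tight_walk_bridge} to supply the vertices and edges needed to fill in each triangle. I would set this up as an induction on $m$, the length of $Q$. The base cases are $m \le 3$: if $m = 3$ then $Q = e_1 e_2 e_3$ with all three pairwise intersections of size $\ge k-1$, so the single triangle $x_1 x_2 x_3$ with $\varphi(x_i) = e_i$ is already a nearly triangulated plane graph that triangulates $Q$ (the cases $m \le 2$ are degenerate and can be handled by padding $Q$, or treated separately). For the inductive step, given a closed tight pseudo-walk of length $m \ge 4$, the goal is to reduce to a shorter one.

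For the inductive step I would proceed as follows. First, if two cyclically consecutive edges of $Q$ are equal, say $e_i = e_{i+1}$, we can delete one of them and shorten $Q$ by one while keeping it a closed tight pseudo-walk; so assume all consecutive edges are distinct. Now pick three consecutive edges $e_1, e_2, e_3$ of $Q$ (indices mod $m$). Using \cref{prop:tight_walk_bridge} applied with $W = V(H)$, there is a tight pseudo-walk $f_1 \dots f_{2k}$ from $e_1$ to $e_3$ inside $H$; call this $P$. Concatenating $P$ with the single edge $e_2$ gives a \emph{closed} tight pseudo-walk $Q' := e_1 f_2 \dots f_{2k-1} e_3 e_2$ of bounded length ($2k+1$), and also $P$ together with the "long way around" $e_3 e_4 \dots e_m e_1$ gives a closed tight pseudo-walk $Q''$ of length $m - 1 + 2k$. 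The subtlety is that $Q''$ is longer than $Q$, so a naive induction on $m$ alone does not close. To handle this I would instead induct on $m$ but allow a cleaner surgery: rather than inserting a generic bridge, I would observe that once we know \emph{any} triangulation of a slightly longer closed pseudo-walk exists, we can glue two triangulated disks along a common boundary path. Concretely: triangulate $Q'$ (length $2k+1$) and triangulate the closed tight pseudo-walk $e_1 e_2 \dots e_m$ is what we want — so the correct move is to glue a triangulated disk for $Q'$ onto a triangulated disk for the shorter walk $e_1 P^{-1} e_3 e_4 \dots e_m$ of length $m + 2k - 3$; to make the induction well-founded, I would induct on the length $m$ but with the key realisation that we only ever need to glue in \emph{fixed-size} pieces coming from \cref{prop:tight_walk_bridge}, and so a potential function like $m$ plus a suitably weighted count works, or — more simply — one inducts on the number of edges of $Q$ that do not lie in a single "fan" and uses the fan from the fixed vertex $x_1$.

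Let me describe the cleanest version, which is the fan construction. Fix the first edge $e_1 = z_1 \dots z_k$ of $Q$. For each $j = 2, \dots, m-1$, apply \cref{prop:tight_walk_bridge} to get a tight pseudo-walk $P_j$ of length $2k$ from $e_1$ to $e_j$ in $H$. Each consecutive pair $e_1 P_j e_j e_{j+1} P_{j+1}^{-1} e_1$ is a closed tight pseudo-walk of length exactly $4k + 1$, independent of $m$. Triangulate each such bounded-length closed walk — here I can invoke the inductive hypothesis since $4k+1$ is a fixed number, but to avoid circularity I would actually prove the bounded-length case by hand first (or note that the fan of a fixed-size closed walk is trivially triangulable by taking any internal vertex and coning, iterating). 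Then glue these $m-2$ triangulated disks together in cyclic order along their shared boundary paths $e_1 P_j e_j$; the result is a nearly triangulated plane graph whose outer face, read off, is $e_1 e_2 \dots e_m$ in order, which is exactly a triangulation of $Q$. One must check that the gluing produces a genuine plane graph (no unwanted identifications forced, the union of disks along a tree of shared arcs is again a disk) and that interior faces are triangles; the boundary arcs $P_j$ are internal after gluing so their non-triangular incident faces get subdivided by the fan cone points.

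The main obstacle I anticipate is exactly this well-foundedness / circularity issue: \cref{prop:tight_walk_bridge} gives bridges of length $2k$, which \emph{increases} total length, so the induction must be structured so that the "new" material always has bounded size while the "old" material strictly shrinks. The fan construction achieves this — each glued piece has the fixed length $4k+1$ — but one still needs a self-contained argument that a closed tight pseudo-walk of bounded length is triangulable, and the honest way to do that is again an induction, now genuinely on length, where one removes a single consecutive pair $e_i e_{i+1} e_{i+2}$, bridges $e_i$ to $e_{i+2}$ via \cref{prop:tight_walk_bridge}, and splits the disk into a triangle-plus-short-walk region (length $2k+1$) and a strictly shorter region — and now the strictly shorter region really is shorter, while the short region is handled by the \emph{same} induction since $2k+1 < m$ once $m$ is large enough, with small $m$ as explicit base cases. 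Verifying that each bridge insertion keeps everything planar and that the two sub-disks can be re-glued without conflict is the routine-but-fiddly part; I would present it via the standard fact that subdividing a bigon (the disk cut along a chord) of a disk by two triangulated sub-disks sharing that chord yields a triangulated disk.
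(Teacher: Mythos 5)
Your high-level structure parallels the paper's: reduce a long closed tight pseudo-walk to shorter ones using bridges from \cref{prop:tight_walk_bridge}, triangulate the short pieces, and glue the resulting plane graphs. The paper's Case B does essentially your fan idea, but more simply, by splitting at the midpoint $i_* = \lfloor m_*/2 \rfloor$, so both pieces have length roughly $m_*/2 + 2k < m_*$ (using that the minimal bad $m_* > 1/(2\eps) \gg k$); this gives a clean strong induction without needing to worry about the geometry of gluing many fans around a hub. Your fan decomposition would also work for the reduction step, provided the bounded-length base case holds.

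The genuine gap is exactly that base case, and your two proposed arguments for it do not work. The ear-removal induction you describe — bridge $e_i$ to $e_{i+2}$ by a tight pseudo-walk $P$ from \cref{prop:tight_walk_bridge} and replace $e_{i+1}$ by $P$ — does \emph{not} produce a strictly shorter closed walk: $P$ has length $2k$, so the ``old'' region becomes $e_1 \dots e_i\, P\, e_{i+2} \dots e_m$ of length $m - 1 + (2k-2) = m + 2k - 3 > m$ for $k \geq 3$. Your remark that ``the strictly shorter region really is shorter'' is therefore false, and the induction does not terminate. The alternative you float — coning from ``any internal vertex'' — is not available here: to cone a vertex $x$ of $D$ onto the whole boundary you would need an edge $e^* \in H$ with $|e^* \cap e_i| \geq k-1$ for every $i$, and no such $e^*$ exists in general.

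What the paper actually does for bounded $m$ (Case A, $m \le 1/(2\eps)$) is a nontrivial explicit construction that has no analogue in your proposal: iteratively shrink intersections $S_{i,j} \subseteq S_{i,j-1} \cap S_{i+1,j-1}$ of size $k-j$ so that consecutive ones still overlap in $k-j-1$ elements; then, since $m \le 1/(2\eps)$, the density lets one choose common neighbours $z_1, \dots, z_k$ lying in $\bigcap_i N_H(\cdot)$ simultaneously for all $i \in [m]$, yielding edges $e_{i,j} = z_1 \dots z_{j-1} \cup S_{i,j-1}$ that triangulate $m$ concentric $m$-cycles collapsing to a single central edge $e_*$. Establishing some version of this layered/shrinking construction is unavoidable, because at this stage there is no ``shorter'' walk to recurse on; the bounded case has to be built from scratch. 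Your proposal would need to supply this, and at present it does not.
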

\begin{proof}
    First we show that the statement holds for $m \leq 1/{2\eps}$.
    \begin{enumerate}[label=\textbf{Case \Alph*:}, ref=\Alph*, wide, labelwidth=0pt, labelindent=0pt]
    \item \textbf{\boldmath $m \leq 1/{2 \eps}$.\unboldmath} \label{case_1}
    In the following we take the first index of each of the~$S_{i,j}$ and each of the~$e_{i,j}$ (defined below) modulo~$m$.
    \begin{claim} \label{claim:edges}
        There exist edges $e_{i,j} \in H$ for all $(i,j) \in [m] \times [k+1]$ such that the following properties hold. 
        \begin{enumerate}[label = \textup{(\roman*)}]
            \item $e_{i,1} = e_i$ for all $i \in [m]$. \label{edgesi}
            \item $|e_{i,j} \cap e_{i+1, j}|, |e_{i,j} \cap e_{i,j+1}|, |e_{i,j} \cap e_{i-1, j+1}| \geq k-1$ for all $i \in [m]$ and all $j \in [k]$. \label{edgesii}
            \item $e_{i, k+1} = e_{i-1,k+1}$ for all $i \in [m]$. \label{edgesiii}
        \end{enumerate}
    \end{claim}
    \begin{proofclaim}
        For each $i \in [m]$, let $S_{i,0} = e_i$. Note that, since $e_1, \dots, e_m$ is a closed tight pseudo-walk, we have $\s{S_{i,0} \cap S_{i+1,0}} \geq k-1$.
        We show that for each $j \in [k-1]$ in turn, we can for each~$i \in [m]$ choose a subset~$S_{i,j}$ of $S_{i, j-1} \cap S_{i+1, j-1}$ of size~$k-j$ such that for all~$i \in [m]$, $\s{S_{i,j} \cap S_{i+1, j}} \geq k-j-1$. 
        Let $j \geq 1$ and assume that the~$S_{i,j-1}$ for~$i \in [m]$ have already been chosen. 
        For each $i \in [m]$, we let~$S_{i,j}$ be an arbitrary subset of $S_{i,j-1} \cap S_{i+1,j-1}$ of size~$k-j$ (this is possible since $\s{S_{i,j-1} \cap S_{i+1,j-1}} \geq k-j$). 
        Then we have, for each~$i \in [m]$,
        \begin{align*}
            \s{S_{i,j} \cap S_{i+1,j}} &= \s{S_{i,j}} + \s{S_{i+1,j}} - \s{S_{i,j} \cup S_{i+1,j}} \geq 2(k-j) - \s{S_{i+1,j-1}} \\ &= 2(k-j) -(k-j+1) = k -j -1,
        \end{align*}
        as desired, where the inequality follows from the fact $S_{i,j}, S_{i+1,j} \subseteq S_{i+1,j-1}$.

        Since~$H$ is $(1-\eps, \eps)$-dense and $m \leq 1/{2 \eps}$ and thus for any $(k-1)$-sets~$T_i$, $i \in [m]$ such that each~$T_i$ is contained in an edge of~$H$, we have
        \begin{align*}
            \s{\bigcap_{i \in [m]} N_H(T_i)} \geq n -m \cdot \eps n \geq n -n/2 = n/2 >0.
        \end{align*}
        Hence we can construct edges as follows. For each $i \in [m]$, let $S_{i, k} = \varnothing$.
        For each $j \in [k]$, let 
        \begin{align*}
            z_j \in \bigcap_{i \in [m]} N_H(z_1 \dots z_{j-1} \cup S_{i,j}) \text{ and } e_{i, j+1} = z_1 \dots z_j \cup S_{i,j} \in H.
        \end{align*}
        Moreover, for every $i \in [m]$, let $e_{i,1} =e_i = S_{i,0}$ so that we have $e_{i,j} = z_1 \dots z_{j-1} \cup S_{i, j-1}$ for every $i \in [m]$ and $j \in [k+1]$.
				Clearly, \ref{edgesi} and \ref{edgesiii} hold. 

			To see~\ref{edgesii}, consider $i \in [m]$ and $j \in [k]$.
				We have
        \begin{align*}
            \s{e_{i,j} \cap e_{i+1,j}} 
						&= \s{(z_1 \dots z_{j-1} \cup S_{i,j-1}) \cap (z_1 \dots z_{j-1} \cup S_{i+1,j-1})}\\
						& = j-1 + \s{S_{i,j-1} \cap S_{i+1,j-1}} 
						\geq j-1 + k - j  = k-1.
        \end{align*}
        Also,
        \begin{align*}
            \s{e_{i,j} \cap e_{i,j+1}} &= \s{(z_1 \dots z_{j-1} \cup S_{i,j-1}) \cap (z_1 \dots z_{j} \cup S_{i,j})}\\
						& \geq j-1 + \s{S_{i,j-1} \cap S_{i,j}} 
            = j-1 + \s{S_{i,j}} = j-1 + k-j = k-1,
        \end{align*}
        where we recall that $S_{i,j} \subseteq S_{i,j-1}$.
        Finally, we have
        \begin{align*}
            \s{e_{i,j} \cap e_{i-1,j+1}} &= \s{(z_1 \dots z_{j-1} \cup S_{i,j-1}) \cap (z_1 \dots z_{j} \cup S_{i-1,j})}   \\
						&\geq j-1 + \s{S_{i,j-1} \cap S_{i-1,j}} 
             = j-1 + \s{S_{i-1,j}} = j-1 + k-j = k-1,
        \end{align*}
        where we recall that $S_{i-1,j} \subseteq S_{i,j-1}$.
    \end{proofclaim}
    Now let~$e_*$ be the unique edge such that $e_* = e_{i,k+1}$ for all $i \in [m]$ ($e_*$ exists by \cref{claim:edges}\cref{edgesiii}) and note that $|e_{i,k} \cap e_*| \geq k-1$ for all $i \in [m]$ by \cref{claim:edges}\cref{edgesii}.

    Using these edges, we now construct a triangulation of~$Q$.
    Roughly speaking, $D$ will be a plane graph consisting of $k$ concentric circles, where $\{e_{i,1} \colon i \in [m]\}$ is on the outermost circle and $e_*$ is at the centre (see \cref{figure:triangulation}). We now formally define $D$.
    We let~$D$ be the graph with $V(D) = \{(i,j) \colon i \in [m], j \in [k]\} \cup \{*\}$ and
    \begin{align*}
        E(D) &= \{\{(i,j), (i+1, j)\} \colon i \in [m], j \in [k]\} \cup \{\{(i,j), (i, j-1)\} \colon i \in [m], 2 \leq j \leq k\} \\
        &\cup \{\{(i,j), (i+1, j-1)\} \colon i \in [m], 2 \leq j \leq k\} \cup \{\{(i,k), *\} \colon i \in [m]\}.
    \end{align*}
    Define the map $\varphi \colon V(D) \rightarrow E(H)$ by setting $\varphi((i,j)) = e_{i,j}$ for all $i \in [m]$ and $j \in [k]$ and $\varphi(*) = e_*$.
    It follows from \cref{claim:edges}\cref{edgesii} that $\s{\varphi(x) \cap \varphi(y)} \geq k-1$ for all $xy \in E(D)$. 
    
    Next we show that there exists an embedding of~$D$ in the plane such that $(1,1) (2,1) \dots \allowbreak (m,1)$ is the outer face and all other faces are triangles. Consider~$k$ concentric circles $C_1, \dots, C_k$ in the plane~$\mathbb{R}^2$ around the origin~$(0,0)$ with radii $1, \dots, k$, respectively. 
    For each $s \in [m]$, we define the ray 
    \[
        R_s = \left\{\left(t \cos\left(\frac{2 \pi s}{m} \right), t \sin\left(\frac{2 \pi s}{m} \right)\right) \colon t \geq 0 \right\},
    \]
    that is,~$R_s$ is the ray $\{(x,0) \colon x \geq 0\}$ rotated counterclockwise around the origin by $2\pi s/m$ radians.
    Consider the embedding of~$V(D)$ in the plane $\mathbb{R}^2$ that maps~$*$ to~$(0,0)$ and~$(i,j)$ to the intersection of~$C_{k+1-j}$ and~$R_i$ for each $i \in [m]$ and each $j \in [k]$. Then mapping each edge $xy \in E(D)$ to the straight line segment between the images of~$x$ and~$y$ gives the desired embedding of~$D$ in the plane. Hence~$D$ with this embedding forms a triangulation of~$Q$.

    \begin{figure}[htb]
    \centering
    \begin{tikzpicture}[scale = 0.7]
        \foreach \j in {1, ..., 4} {
            \foreach \i in {0, ..., 11}{
                \draw[red, rotate around = {\i*360/12:(0,0)}] (0,0) edge (4,0);
                \draw[red, rotate around = {\i*360/12:(0,0)}] (\j,0) edge (0.866*\j, -0.5*\j);
                \ifthenelse{\j < 4}{\draw[red, rotate around = {\i*360/12:(0,0)}] (\j +1,0) edge (0.866*\j, 0.5*\j);}{}
                \ifthenelse{\j = 4 \and \i > 0}{\draw[rotate around = {\i*360/12:(0,0)}] node at (4.6,0) {$e_{\pgfmathparse{int(\i)}\pgfmathresult}$}; }{\ifthenelse{\j=4}{\draw[rotate around = {\i*360/12:(0,0)}] node at (4.6
                ,0) {$e_{\pgfmathparse{int(12)}\pgfmathresult}$};}{} }
            }
        }
        \foreach \j in {1, ..., 4} {
            \foreach \i in {0, ..., 11}{
                \draw[rotate around = {\i*360/12:(0,0)}] node at (\j,0) {\(\bullet\)};
            }
            \draw (0,0) circle (\j cm);
        }
        \draw node at (0,0) {\(\bullet\)};
    \end{tikzpicture}
    \caption{A triangulation of a closed tight pseudo-walk}
    \label{figure:triangulation}
    \end{figure}
    
    \item \textbf{\boldmath $m > 1/{2 \eps}$.\unboldmath}
    Suppose that there is some $m > 1/{2 \eps}$ for which the statement of the lemma does not hold. Let~$m_*$ be minimal such that there is a closed tight pseudo-walk $Q = e_1 \dots e_{m_*}$ for which the statement does not hold. By Case~\ref{case_1}, we know that $m_* > 1/{2\eps}$. Let $i_* = \lfloor {m_*}/{2} \rfloor$. By \cref{prop:tight_walk_bridge}, there exist edges $f_1, \dots, f_{2k-2}$ such that $e_{i_*} f_1 \dots f_{2k-2} e_1$ is a tight pseudo-walk in~$H$. Note that $Q_1 = e_1 \dots e_{i_*}f_1 \dots f_{2k-2}$ and $Q_2 = e_1 f_{2k-2} f_{2k-3} \dots f_1 e_{i_*} e_{i_* +1} \dots \allowbreak e_{m_*}$ are closed tight pseudo-walks in~$H$ of length less than~$m_*$. Thus there exists plane graphs~$D_1$ and~$D_2$ that are triangulations of~$Q_1$ and~$Q_2$, respectively. It is easy to see that we can combine~$D_1$ and~$D_2$ by `gluing' along the vertices corresponding to the edges $e_{i_*}, f_1, \dots, f_{2k-2}, e_1$ to obtain a plane graph~$D$ that is a triangulation of~$Q$. This is a contradiction to the fact that the statement of the lemma does not hold for~$Q$. \qedhere
    \end{enumerate} 
\end{proof}

\FloatBarrier

The next lemma and its proof was inspired by the proof of the fact that the Hex game cannot end in a draw (see for example~\cite{Gale1979}). It helps us find certain monochromatic walks in nearly triangulated plane graphs whose vertices are coloured with red and blue.

\begin{lemma} \label{lem:grid_argument}
    Let~$D$ be a (finite) nearly triangulated plane graph with a red-blue colouring of its vertices. Let $x_1 \dots x_m$ be the outer face of~$D$ where the indices of the~$x_i$ are taken modulo~$m$. Suppose that~$x_1$ is red and~$x_2$ is blue. Then there exists some $i_* \in [m]$ such that~$x_{i_*}$ is blue,~$x_{i_*+1}$ is red, and there exists a red walk in~$D$ from~$x_1$ to~$x_{i_*+1}$ and a blue walk in~$D$ from~$x_2$ to~$x_{i_*}$. 
\end{lemma}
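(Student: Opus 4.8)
The plan is to adapt the classical argument that Hex cannot end in a draw (see Gale~\cite{Gale1979}) by tracing the red--blue interface of $D$. Call an edge of $D$ \emph{bichromatic} if its endpoints get different colours. The starting point is that every triangular (i.e.\ bounded) face of $D$ contains exactly $0$ or $2$ bichromatic edges, since a $2$-coloured triangle is either monochromatic or split $2$--$1$; moreover a bichromatic edge on the outer boundary $x_1\dots x_m$ lies in exactly one triangular face, while any other bichromatic edge lies in exactly two. (We may assume $m\ge 3$.)

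Next I would define the \emph{interface graph} $I$ on the set of bichromatic edges of $D$, making two distinct bichromatic edges adjacent whenever they lie in a common triangular face. By the above, every vertex of $I$ has degree at most $2$, with degree exactly $1$ precisely for the bichromatic edges on the outer boundary; so $I$ is a disjoint union of paths and cycles whose path-endpoints are exactly the boundary edges $x_ix_{i+1}$ at which the colour changes. Since $x_1$ is red and $x_2$ blue, $x_1x_2$ is one of these, hence it is an endpoint of a path-component $P=e^{(0)}e^{(1)}\dots e^{(\ell)}$ of $I$ with $e^{(0)}=x_1x_2$; one checks $\ell\ge 1$ and that $e^{(\ell)}$ is again a boundary bichromatic edge, so $e^{(\ell)}=x_{i_*}x_{i_*+1}$ for some $i_*$, with one of $x_{i_*},x_{i_*+1}$ red and the other blue.

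Now view $P$ as a simple curve in the closed disc bounded by the outer cycle: it passes through the midpoints of $e^{(0)},\dots,e^{(\ell)}$ and runs inside the common triangular face between consecutive midpoints (distinct faces occur at distinct steps, so the curve is simple), leaving the boundary at $\mathrm{mid}(x_1x_2)$ and returning at $\mathrm{mid}(x_{i_*}x_{i_*+1})$. Write $r_j,b_j$ for the red and blue endpoints of $e^{(j)}$. If $e^{(j)}$ and $e^{(j+1)}$ share the vertex $v$ of their common triangular face $T$, then when $v$ is red we have $r_j=r_{j+1}=v$ and the other two vertices of $T$ are $b_j,b_{j+1}$, so $b_jb_{j+1}\in E(D)$; when $v$ is blue the roles swap. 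In all cases $r_j,r_{j+1}$ are equal or adjacent (and both red), and likewise $b_j,b_{j+1}$, so $r_0\dots r_\ell$ is a red walk in $D$ from $x_1$ and $b_0\dots b_\ell$ a blue walk in $D$ from $x_2$, with $\{r_\ell,b_\ell\}=\{x_{i_*},x_{i_*+1}\}$.

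The remaining point, which I expect to be the main obstacle, is to fix the orientation: to show $r_\ell=x_{i_*+1}$ and $b_\ell=x_{i_*}$. For this I would fix a planar orientation and prove by induction along $P$ that $r_j$ always lies on one fixed side of the oriented curve $P$ and $b_j$ on the other — the key being that when $P$ pivots inside a triangle around the shared vertex $v$, it does not separate $v$ from the side it was on, so the side of the surviving common endpoint is preserved and the new endpoint lands on the opposite side. Evaluating this at the two ends, where $P$ meets the outer cycle and the cycle's orientation dictates which of $x_1,x_2$ (resp.\ $x_{i_*},x_{i_*+1}$) lies on which side, forces $x_1$ and $r_\ell$ onto the same side, i.e.\ $r_\ell=x_{i_*+1}$ and $b_\ell=x_{i_*}$, as required. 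All the face-counting and the walk construction are routine; the only delicate part is making ``side of the curve $P$'' precise and checking the pivoting step.
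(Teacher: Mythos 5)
Your proposal follows essentially the same route as the paper: both adapt the Hex no-draw argument by tracing the interface between red and blue. Your interface graph $I$ on bichromatic edges, with adjacency given by shared triangular faces, is exactly the paper's dual-graph construction $\widetilde{D}$ in disguise (the paper makes the faces the vertices and the bichromatic edges the edges; you swap those roles), the degree analysis is the same, and the extraction of the red walk $r_0\dots r_\ell$ and blue walk $b_0\dots b_\ell$ from the endpoints of consecutive interface edges is carried out in the paper implicitly. The one place you diverge in technique is the orientation step at the end. The paper closes the interface path through the outer face to obtain a closed Jordan curve $\mathcal{C}$, and then reads off the conclusion directly from the separation property: all red endpoints of crossed edges lie in one component of $\mathbb{R}^2\setminus\mathcal{C}$ and all blue ones in the other, which forces $x_{i_*}$ blue and $x_{i_*+1}$ red. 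You instead propose an explicit induction tracking which side of the open arc $P$ each of $r_j,b_j$ is on. This would work once ``side of $P$'' is made rigorous (e.g.\ by closing $P$ along an arc of the outer cycle, or invoking the Jordan arc theorem for a crosscut of the disc), and you correctly flag this as the delicate point; but the paper's version, passing through the outer face and using the Jordan curve theorem once, is the cleaner way to discharge it.
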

\begin{proof}
    Consider the dual graph~$D^*$ of~$D$. The set of vertices of~$D^*$ is the set~$F$ of faces of~$D$ and $f_1f_2 \in D^*$ if and only if the faces~$f_1$ and~$f_2$ share an edge $e \in D$. We say that the edge~$f_1f_2$ of~$D^*$ corresponds to the edge~$e$ of~$D$. Let~$f_*$ be the outer face of~$D$. Note that for every face $f \in F \setminus \{f_*\}$, we have $d_{D^*}(f) = 3$ since~$f$ is a triangle in~$D$. Now consider the spanning subgraph $\widetilde{D}$ of~$D^*$ which contains only the edges $f_1f_2 \in D^*$ such that the corresponding edge $e \in D$ consists of two vertices with different colours. 
    Note that for every face $f \in F \setminus\{f_*\}$, we have $d_{\widetilde{D}}(f) \in \{0,2\}$. Thus there is a unique cycle~$C$ in $\widetilde{D}$ that contains~$f_*$ and the edge of $\widetilde{D}$ that corresponds to the edge $x_1x_2 \in D$. The other edge of~$C$ incident to~$f_*$ corresponds to an edge $x_{i_*}x_{i_*+1}$ of~$D$ for some $i_* \in [m]$. Observe that there exists a closed curve $\mathcal{C}$ in the plane that starts on the outer face and only crosses edges of~$D$ that correspond to edges of~$C$. Each edge crossed by $\mathcal{C}$ consists of two vertices of different colours. Moreover, since all faces except the outer face are triangles, all the vertices of crossed edges of~$D$ of one colour are in the bounded region of~$\mathbb{R}^2 \setminus \mathcal{C}$ and all the vertices of crossed edges of the other colour are in the unbounded region of $\mathbb{R}^2 \setminus \mathcal{C}$. Hence~$x_{i_*}$ is blue and~$x_{i_*+1}$ is red. Furthermore, there is a red path in~$D$ from~$x_1$ to~$x_{i_*+1}$ and  a blue path in~$D$ from~$x_2$ to~$x_{i_*}$. See \cref{fig:grid_argument}.
\end{proof}

\begin{figure}[htb]
    \centering
    \begin{tikzpicture}
        \foreach \pos/\name in {{(6,0)/x_1},{(3.75,3)/x_4},{(0.5,2.5)/x_6},{(-0.5,1.5)/x_7},{(3.75,-1.5)/x_{11}},{(5,-1)/x_{12}},{(1.8,-0.4)/z_1},{(0.6,0.2)/z_2},{(0.7,1.3)/z_3},{(4.1,1.7)/a}} {
                \node[red_vertex] (\name) at \pos {}; 
        }
        \foreach \pos/\name in {{(6,1.5)/x_2},{(5,2.5)/x_3},{(2,3)/x_5},{(-0.5,0)/x_8},{(0.5,-1)/x_9},{(2,-1.5)/x_{10}},{(5,0.75)/y_1},{(3.75,0)/y_2},{(2,0.75)/y_3},{(3,1.8)/y_4}} {
                \node[blue_vertex] (\name) at \pos {};
        }
				
				\foreach \i in {x_1,x_2,x_3,x_4,x_5,x_6,x_7,x_8,x_9,x_{10},x_{11},x_{12}}
					{				\draw (\i) node {$\i$};	}

        \foreach \source/\dest in {x_1/x_2,x_2/x_3,x_3/x_4,x_4/x_5,x_5/x_6,x_6/x_7,x_7/x_8,x_8/x_9,x_9/x_{10},x_{10}/x_{11},x_{11}/x_{12},x_{12}/x_1,x_1/y_1,x_2/y_1,x_3/y_1,a/y_1,y_1/y_2,x_{12}/y_1,a/y_2,y_2/x_{12},y_2/x_{11},a/x_3,a/x_4,a/y_4,y_2/y_4,y_2/y_3,y_2/z_1,x_{11}/z_1,z_1/x_{10},z_1/x_9,z_1/z_2,z_2/x_9,z_2/x_8,z_3/x_8,z_3/x_7,z_2/z_3,z_3/x_6,y_3/z_1,y_3/z_2,y_3/z_3,y_3/y_4,z_3/y_4,y_4/x_5,z_3/x_5,y_4/x_4} {
                \path[edge] (\source) -- (\dest);
        }
        \begin{pgfonlayer}{bg}
            \foreach \source/\dest in {x_1/x_{12},x_{12}/x_{11},x_{11}/z_1,z_1/z_2,z_2/z_3,z_3/x_6} {
              \path[highlighted_edge, red!15] (\source.center) -- (\dest.center);
            }
            \foreach \source/\dest in {x_2/y_1,y_1/y_2,y_2/y_3,y_3/y_4,y_4/x_5} {
              \path[highlighted_edge, blue!15] (\source.center) -- (\dest.center);
            }

        \end{pgfonlayer}

        \draw [violet, line width =1.2pt, tension= 0.7] plot [smooth cycle] coordinates {(6.7,0.75)(5.6,0.75) (5.4,0) (4.5,0) (4.1,-0.8) (3.2,-0.6) (2.5,0.1)(1.5,0.15) (1.1,0.75) (1.9,1.3) (1.9,2.1) (1.1,2.3) (1.3, 3.3)  (3,3.9) (5, 3.5)  (6.75, 2)};

        \node[violet] at (7.2, 0.75) {$\mathcal{C}$};
    \end{tikzpicture}
    \caption{A picture of the argument for \cref{lem:grid_argument} with $m=12$ and $i^*=5$.} \label{fig:grid_argument}
\end{figure}

The next lemma allows us to deduce that certain edges are in the same monochromatic tight component.

\begin{lemma} \label{lem:blue_tight_walk_2}
    Let $1/n \ll \eps \ll 1/k \leq 1/2$. Let~$H$ be a $2$-edge-coloured $(1-\eps, \eps)$-dense $k$-graph on~$n$ vertices. Let $Q= e_1 \dots e_m$ be a closed tight pseudo-walk in~$H$ such that for some $1 < i < m$, we have that~$e_1$ and~$e_i$ are edges in different red tight components of~$H$. Then there exists a blue edge in $\{e_2 \dots, e_{i-1}\}$ that is in the same blue tight component as a blue edge in $\{e_{i+1}, \dots, e_m\}$. The statement also holds with colours reversed.
\end{lemma}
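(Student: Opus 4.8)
\emph{Proof proposal.} The plan is to translate the statement about tight components of $H$ into a statement about a red--blue colouring of a plane graph, and then reduce to \cref{lem:grid_argument}. First I would apply \cref{lem:triangulation} to the closed tight pseudo-walk $Q$ to obtain a nearly triangulated plane graph $D$ that is a triangulation of $Q$, with associated map $\varphi\colon V(D)\to E(H)$ and outer face $x_1\dots x_m$ satisfying $\varphi(x_j)=e_j$ for all $j\in[m]$. Colour a vertex $v$ of $D$ red if $\varphi(v)$ is a red edge of $H$ and blue otherwise; since $e_1$ and $e_i$ lie in red tight components, $x_1$ and $x_i$ are both red. The key observation is that \emph{there is no red walk in $D$ from $x_1$ to $x_i$}: a red walk $x_1=y_0,y_1,\dots,y_\ell=x_i$ in $D$ would, via $\varphi$, give a tight pseudo-walk $\varphi(y_0)\varphi(y_1)\dots\varphi(y_\ell)$ in $H$ (consecutive vertices of $D$ are adjacent, so their images share at least $k-1$ vertices) all of whose edges are red, placing $e_1$ and $e_i$ in a common red tight component, contrary to hypothesis. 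It therefore suffices to produce a blue walk in $D$ from some $x_a$ with $a\in\{2,\dots,i-1\}$ to some $x_c$ with $c\in\{i+1,\dots,m\}$: applying $\varphi$ to it yields a blue tight pseudo-walk in $H$ witnessing that the blue edges $e_a\in\{e_2,\dots,e_{i-1}\}$ and $e_c\in\{e_{i+1},\dots,e_m\}$ lie in the same blue tight component.

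To find such a blue walk, I would enlarge $D$ to a nearly triangulated plane graph $D''$ by adding two blue vertices. Place a new vertex $b_1$ in the outer face of $D$ and join it by pairwise non-crossing arcs to $x_1,x_2,\dots,x_i$ (possible, as these vertices occur consecutively on the outer cycle), so that the region between $b_1$ and the path $x_1x_2\dots x_i$ is subdivided into the triangles $b_1x_jx_{j+1}$ for $1\le j\le i-1$. Then place a second new vertex $b_2$ in the still-unbounded region and join it to $x_i,x_{i+1},\dots,x_m,x_1$ (consecutive on the boundary of that region), subdividing the corresponding region into triangles $b_2x_jx_{j+1}$ for $i\le j\le m$ (indices modulo $m$). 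The resulting plane graph $D''$ is nearly triangulated, and its outer face is the $4$-cycle $x_1b_1x_ib_2$, whose vertices are, in cyclic order, red, blue, red, blue. Because $b_1$ and $b_2$ are blue, every red walk in $D''$ is a red walk in $D$, so there is still no red walk in $D''$ from $x_1$ to $x_i$.

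Now I would apply \cref{lem:grid_argument} to $D''$ with outer face written as $z_1z_2z_3z_4:=x_1b_1x_ib_2$, so that $z_1$ is red and $z_2$ is blue. It produces an index $i_*\in[4]$ with $z_{i_*}$ blue, $z_{i_*+1}$ red, a red walk in $D''$ from $z_1$ to $z_{i_*+1}$, and a blue walk in $D''$ from $z_2$ to $z_{i_*}$. Since $z_1=x_1$ and $z_3=x_i$ are red, we must have $i_*\in\{2,4\}$, and $i_*=2$ is impossible, since it would give a red walk in $D''$, hence in $D$, from $x_1$ to $x_i$. Therefore $i_*=4$ and we obtain a blue walk from $b_1$ to $b_2$ in $D''$. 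Passing to a shortest such walk $b_1=w_0,w_1,\dots,w_t=b_2$, its internal vertices $w_1,\dots,w_{t-1}$ avoid $b_1$ and $b_2$ (otherwise the walk could be shortened) and hence lie in $D$; they are blue, $w_1$ is a blue neighbour of $b_1$ and so equals $x_a$ for some $a\in\{2,\dots,i-1\}$ (the neighbours $x_1,x_i$ of $b_1$ being red), and similarly $w_{t-1}=x_c$ for some $c\in\{i+1,\dots,m\}$. Thus $w_1w_2\dots w_{t-1}$ is a blue walk in $D$ from $x_a$ to $x_c$, which by the first paragraph completes the proof. The colour-reversed statement follows by exchanging the roles of red and blue throughout.

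I expect the main work to be in the construction of $D''$ — checking that the two fans can be drawn without crossings and that the outer face becomes precisely the $4$-cycle $x_1b_1x_ib_2$ with the claimed colouring — together with the routine but careful translation back and forth between walks in $D$ (or $D''$) and tight pseudo-walks in $H$, where passing to a shortest blue $b_1$--$b_2$ walk is what cleanly confines the relevant part to $D$. The conceptually crucial point is the reduction of a ``four-sided Hex board'' to \cref{lem:grid_argument}: it is exactly the non-existence of a red walk between the two red corners $x_1$ and $x_i$ that excludes the outcome $i_*=2$ and forces the blue crossing we are after.
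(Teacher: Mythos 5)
Your proof is correct, and it takes a genuinely different route from the paper's. The paper proceeds by induction on the number $s$ of red edges among $e_2,\dots,e_{i-1}$: in the base case $s=0$ it triangulates $Q$ and applies \cref{lem:grid_argument} directly, using the fact that $x_2,\dots,x_{i-1}$ are all blue to force the returned index $i_*$ into $\{i+1,\dots,m\}$; in the inductive step it isolates a shorter ``red interval'' $\{e_{j_1},\dots,e_{j_2}\}$, invokes the base case to get a blue pseudo-walk $P_*$, and reroutes $Q$ through $P_*$ to produce a closed walk with strictly fewer red edges between $e_1$ and $e_i$. Your approach dispenses with the induction by a topological augmentation: fanning two auxiliary blue vertices $b_1,b_2$ over the two arcs of the outer cycle of $D$ delimited by the red corners $x_1,x_i$ turns the boundary into a four-cycle alternating red, blue, red, blue, so a single application of \cref{lem:grid_argument} leaves only two possible outcomes, of which $i_*=2$ is killed by the non-existence of a red $x_1$--$x_i$ walk; the blue $b_1$--$b_2$ walk then automatically lands in the two prescribed arcs at its penultimate vertices, with no assumption on how colours are distributed along those arcs. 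This is cleaner and closer to the Hex-game intuition. In a final write-up you would want to note explicitly that the hypotheses force $i\ge 3$ and $m\ge 4$ (two consecutive edges of a tight pseudo-walk share $k-1$ vertices, hence lie in the same tight component), so the two fans are well defined and the outer face of $D''$ really is the $4$-cycle $x_1 b_1 x_i b_2$; apart from that, the translation between walks in $D$ (or $D''$) and tight pseudo-walks in $H$ and the shortest-walk argument confining the blue walk's interior to $D$ are both sound as stated.
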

\begin{proof}
    Let~$s$ be the number of red edges in $\{e_2, \dots, e_{i-1}\}$. We proceed by induction on~$s$.
    \begin{enumerate}[label=\textbf{Case \Alph*:}, ref=\Alph*, wide, labelwidth=0pt, labelindent=0pt, parsep=5pt]
    \item \textbf{\boldmath $s=0$.\unboldmath}\label{CaseA}
    By \cref{lem:triangulation}, there exists a nearly triangulated plane graph~$D$ that is a triangulation for~$Q$. That is, there exists a map $\varphi \colon V(D) \rightarrow E(H)$ such that if $xy \in E(D)$, then $\s{\varphi(x) \cap \varphi(y)} \geq k-1$ and the outer face of~$D$ is $x_1 \dots x_m$ where $\varphi(x_i) = e_i$ for each~$i \in [m]$. 
    We colour the vertices of~$D$ with red and blue by giving each $x \in V(D)$ the colour of the edge $\varphi(x) \in H$. Note that any red (or blue) walk $y_1 \dots y_\ell$ in~$D$ corresponds to a red (or blue) tight pseudo-walk $\varphi(y_1) \dots \varphi(y_\ell)$ in~$H$. Let the indices of the~$x_i$ be taken modulo~$m$. By \cref{lem:grid_argument}, there exists some $i_* \in [m]$ such that~$x_{i_*}$ is blue,~$x_{i_*+1}$ is red, and there exists a red walk in~$D$ from~$x_1$ to~$x_{i_*+1}$ and a blue walk in~$D$ from~$x_2$ to~$x_{i_*}$. Note that since $e_2, \dots, e_{i-1}$ and thus $x_2, \dots, x_{i-1}$ are blue, we have $i-1 \leq i_* \leq m$. Suppose $i_* = i-1$. Then there is a red walk in~$D$ from~$x_1$ to~$x_i$. This implies that there is a red tight pseudo-walk in~$H$ from~$e_1$ to~$e_i$, contradicting the fact that~$e_1$ and~$e_i$ are in different red tight components of~$H$. Hence $i \leq i_* \leq m$. Since~$x_i$ is red and~$x_{i_*}$ is blue, we have $i+1 \leq i_* \leq m$. Since there is a blue walk in~$D$ from~$x_2$ to~$x_{i_*}$, there is a blue tight pseudo-walk from~$e_2$ to $e_{i_*} \in \{e_{i+1}, \dots, e_m\}$.

    \item \textbf{\boldmath $s \ge 1$.\unboldmath}
    Let $j_2 \in \{3, \dots, i\}$ be minimal such that~$e_{j_2}$ is a red edge in a different red tight component than~$e_1$. Let $j_1 \in \{1, \dots, j_2 -2\}$ be maximal such that~$e_{j_1}$ is red. Note that~$e_{j_1}$ is in the same red tight component as~$e_1$ and thus~$e_{j_1}$ is in a different red tight component than~$e_{j_2}$. Moreover, note that all edges in $\{e_{j_1+1}, \dots, e_{j_2-1}\}$ are blue. Hence by Case~\ref{CaseA}, there exists a blue tight pseudo-walk~$P_*$ from~$e_{j_1+1}$ to a blue edge $f \in \{e_{j_2 +1}, \dots, e_m, e_1, \dots, e_{j_1-1}\}$. Let $P_* = e_{j_1+1} f_1 \dots f_s f$.
    If~$f$ is not in $\{e_1, \dots, e_i\}$, then we are done. So we may assume that~$f$ is in $\{e_2, \dots, e_{j_1-1}, e_{j_2+1}, \dots e_{i-1}\}$ (note that~$f$ cannot be~$e_1$ or~$e_i$ as these are red edges). 
    Suppose that $f = e_{i'}$ for some $i' \in \{2, \dots, j_1-1\}$. Let~$Q_*$ be the closed tight pseudo-walk obtained from $Q$ by replacing $e_{i'} \dots e_{j_1+1}$ with $P_*$, that is $Q_* = e_1 \dots e_{i'} f_s f_{s-1} \dots f_1 e_{j_1+1}\dots e_m$. Note that $Q_*$ has fewer red edges between $e_1$ and $e_i$. Hence we are done by induction. A similar argument holds when $f \in \{e_{j_2+1}, \dots, e_{i-1}\}$. \qedhere
    \end{enumerate} 
\end{proof}

We will need the fact that any almost complete red-blue edge-coloured $k$-graph contains a monochromatic tight component that covers almost all vertices. 
This easily follows from the next proposition.

\begin{proposition}[Corollary 26 in \cite{Lo2023}] \label{prop:large_MC}
    Let $1/n \ll \eps \leq 1/324$. Let~$F$ be a $2$-edge-coloured $2$-graph with $\s{V(F)} \leq n$ and $\s{E(F)} \geq (1-\eps)\binom{n}{2}$. Then there exists a subgraph~$F'$ of~$F$ of order at least $(1-3\sqrt{\eps})n$ that contains a spanning monochromatic component.
\end{proposition}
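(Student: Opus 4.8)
The plan is to delete from $F$ a small set of vertices of unusually high non-degree, after which the graph is dense enough that one can recover, in a robust form, the classical fact that every $2$-edge-coloured complete graph has a spanning monochromatic component --- either directly, or via a one-line minimum-degree connectivity argument. Throughout write $\delta=\sqrt\eps$, so that $\delta\le 1/18$, and let $n'=|V(F)|$. First, $\binom{n'}{2}\ge|E(F)|\ge(1-\eps)\binom{n}{2}$ forces $n'\ge(1-\eps)n$ once $n$ is large. Secondly, $F$ has at most $\eps\binom{n}{2}$ non-edges, so the non-degrees of $F$ sum to less than $\eps n^2=\delta^2n^2$, and hence fewer than $\delta n$ vertices of $F$ have non-degree at least $\delta n$. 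Let $W$ be the set of the remaining vertices, put $N=|W|$ and $G=F[W]$; then $N\ge n'-\delta n\ge(1-\delta-\delta^2)n$, and every vertex of $W$ has fewer than $\delta n$ non-neighbours in $W$, so $\delta(G)\ge N-1-\delta n$.

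Now let $A$ be the vertex set of a largest component of $G^{\red}$, and split into three cases according to $|A|$. If $|A|\ge N-\delta n$, take $F'=F[A]$: its order is at least $N-\delta n\ge(1-2\delta-\delta^2)n\ge(1-3\delta)n$, and its red subgraph is connected and spans $A$.

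If $2\delta n<|A|<N-\delta n$, use that $A$ being a component of $G^{\red}$ means each $u\in W\setminus A$ meets $A$ only in blue edges and non-edges, so $u$ has more than $|A|-\delta n$ blue neighbours in $A$. As $|A|>2\delta n$, for any two vertices of $W\setminus A$ these blue-neighbour sets have more than $|A|$ elements together and so intersect; hence all of $W\setminus A$ lies in one blue component $B$ of $G$, which moreover contains more than $|A|-\delta n$ vertices of $A$. Thus $|B|>(N-|A|)+(|A|-\delta n)=N-\delta n\ge(1-3\delta)n$, and $F'=F[B]$ works. Finally, if $|A|\le 2\delta n$ then every component of $G^{\red}$ has at most $2\delta n$ vertices, so every vertex of $G$ has red degree less than $2\delta n$ and hence blue degree at least $N-1-3\delta n>N/2$ (using $8\delta<1$ and $N$ large); since a graph on $N$ vertices with minimum degree exceeding $N/2$ is connected, $G^{\blue}$ is connected and $F'=G$ works, of order $N\ge(1-3\delta)n$.

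I do not expect a genuinely hard step here: the structural content --- the overlapping-blue-neighbourhoods argument and the minimum-degree connectivity bound --- is short, and the real point is the reduction to an almost-complete graph by discarding the few high-non-degree vertices. The only care needed is the bookkeeping of the thresholds so that the three cases glue together with the final bound exactly $(1-3\sqrt\eps)n$; this is where the hypothesis $\eps\le 1/324$ is used, both to guarantee inequalities such as $8\delta<1$ and to absorb the lower-order $1/n$ terms suppressed above.
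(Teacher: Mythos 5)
This proposition is imported verbatim from reference~\cite{Lo2023} (Corollary~26 there), and the present paper does not reproduce a proof of it, so there is no in-paper argument to compare your proposal against. Judged on its own terms, your proof is correct and self-contained. The reduction is clean: deleting the fewer than $\sqrt\eps\, n$ vertices of non-degree at least $\sqrt\eps\, n$ leaves a graph $G$ on $N\geq (1-\delta-\delta^2)n$ vertices (with $\delta=\sqrt\eps$) in which every vertex has fewer than $\delta n$ non-neighbours, and the trichotomy on the size $|A|$ of the largest red component of $G$ covers all possibilities since $N-\delta n>2\delta n$ when $\delta\le 1/18$. I checked the three cases: in the first, $F[A]$ has order at least $N-\delta n\geq(1-2\delta-\delta^2)n\geq(1-3\delta)n$ and the red component spans it; in the second, the pairwise-intersecting blue neighbourhoods in $A$ (sizes exceeding $|A|-\delta n>|A|/2$) force all of $W\setminus A$ and more than $|A|-\delta n$ vertices of $A$ into one blue component $B$ of order more than $N-\delta n\geq(1-3\delta)n$; in the third, every vertex has red degree below $2\delta n$, so blue degree exceeds $N-1-3\delta n>N/2$, whence $G^{\blue}$ is connected and $G$ itself works. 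Each case delivers the claimed bound $(1-3\sqrt\eps)n$, and the arithmetic that needs $\eps\le 1/324$ and $1/n\ll\eps$ (for instance $|V(F)|\geq(1-\eps)n$ and the $N/2$ threshold in the last case) holds.
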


We say that a vertex~$v$ in a $2$-edge-coloured $k$-graph~$H$ is \emph{spanned} by a monochromatic tight component~$K$ of~$H$ if there is an edge $e \in K$ such that $v \in e$.

\begin{proposition} \label{prop:Almost_spanning_MTC}
    Let $1/n \ll \eps \ll 1/k \leq 1/2$ and let~$H$ be a $(1-\eps, \eps)$-dense $k$-graph on~$n$ vertices. Then~$H$ contains a monochromatic tight component~$K$ of~$H$ spanning at least $(1-\sqrt{\eps})n$ vertices of~$H$.
\end{proposition}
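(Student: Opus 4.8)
The plan is to reduce the $k$-uniform statement to the $2$-uniform statement (Proposition \ref{prop:large_MC}) by projecting onto a single pair of coordinates, taking advantage of the fact that a $(1-\eps,\eps)$-dense $k$-graph has very few ``bad'' $(k-2)$-sets. First I would pick a typical $(k-2)$-set $T \subseteq V(H)$: since $H$ is $(1-\eps,\eps)$-dense, for all but at most $\eps\binom{n}{k-2}$ choices of $T$ we have $d_H(T) \geq (1-\eps)\binom{n}{2}$, so such a $T$ exists. Define the auxiliary $2$-graph $F$ on vertex set $V(H)\setminus T$ by putting $uv \in E(F)$ whenever $T \cup \{u,v\} \in E(H)$; colour $uv$ with the colour of the edge $T\cup\{u,v\}$ in $H$. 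Then $|V(F)| = n - (k-2) \leq n$ and $|E(F)| = d_H(T) \geq (1-\eps)\binom{n}{2} \geq (1-\eps)\binom{|V(F)|}{2} \cdot (1 - o(1))$; adjusting constants slightly (say working with $2\eps$ in place of $\eps$, which is fine since $\eps \ll 1/k$), the hypotheses of Proposition \ref{prop:large_MC} are met.

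Applying Proposition \ref{prop:large_MC} to $F$ yields a subgraph $F'$ on a set $U \subseteq V(F)$ with $|U| \geq (1-3\sqrt{\eps})|V(F)| \geq (1-4\sqrt{\eps})n$ such that $F'$ has a spanning monochromatic component, say red without loss of generality; that is, the red graph $F^{\red}[U]$ is connected. Now I claim the red edges of $H$ of the form $T\cup\{u,v\}$ with $u,v \in U$ all lie in a single red tight component $K$ of $H$: indeed, if $T\cup\{u,v\}$ and $T\cup\{u',v'\}$ are two such red edges, then since $F^{\red}[U]$ is connected there is a path $u = w_0, w_1, \dots, w_\ell = u'$ in $F^{\red}[U]$, and then $T\cup\{w_0,w_1\},\ T\cup\{w_1,w_2\},\ \dots$ is a tight pseudo-walk in $H^{\red}$ (consecutive edges share the $k-1$ vertices $T \cup \{w_i\}$), which after an additional short adjustment at the two endpoints connects $T\cup\{u,v\}$ to $T\cup\{u',v'\}$; more simply, every red edge $T\cup\{u,v\}$ with $u,v \in U$ is tightly connected to $T \cup \{u_0, v_0\}$ for a fixed reference red edge via such walks, since $U$ is connected in $F^{\red}$. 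Hence all these edges lie in one red tight component $K$.

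It remains to check $K$ spans enough vertices. Every $u \in U$ with $\deg_{F^{\red}[U]}(u) \geq 1$ lies in a red edge of $F'$, hence in an edge of $K$, hence is spanned by $K$. Since $F^{\red}[U]$ is connected on $|U| \geq (1-4\sqrt{\eps})n$ vertices, it has no isolated vertices (as long as $|U| \geq 2$, which holds for $n$ large), so every vertex of $U$ is spanned by $K$; thus $K$ spans at least $(1-4\sqrt{\eps})n$ vertices, and after replacing $\eps$ by a suitably smaller constant in the hierarchy $1/n \ll \eps \ll 1/k$ this gives at least $(1-\sqrt{\eps})n$ as required (the constant $4$ and the loss of $k-2$ vertices are absorbed since $\eps \ll 1/k$ and we may simply run the argument with $\eps' = \eps/100$, say). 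The main obstacle — really the only subtlety — is bookkeeping the constants so that the projection step genuinely satisfies the density hypothesis $\eps \leq 1/324$ of Proposition \ref{prop:large_MC} and so that the various $\sqrt{\eps}$ and additive $O(k)$ losses collapse into the single clean bound $(1-\sqrt{\eps})n$; this is routine given the hierarchy. One should also double-check that $|V(F)|$ is close enough to $n$ that $(1-\eps)\binom{n}{2} \geq (1-2\eps)\binom{|V(F)|}{2}$, which holds since $k$ is fixed and $n$ is large.
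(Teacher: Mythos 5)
Your argument is correct and is essentially the paper's own proof: both select a typical $(k-2)$-set $S$ with $d_H(S)\geq(1-\eps)\binom{n}{2}$, apply Proposition~\ref{prop:large_MC} to the link graph $H_S$, and lift the resulting monochromatic component to a monochromatic tight component of~$H$. The only difference is that you spell out explicitly why the red component of $H_S$ yields a single tight component of $H$ (via the tight pseudo-walk $T\cup\{w_0,w_1\},T\cup\{w_1,w_2\},\dots$ along a path in $F^{\red}$) and track the constants a bit more carefully; the paper leaves both steps implicit.
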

\begin{proof}
    Let $S \in \binom{V(H)}{k-2}$ with $d_H(S) \geq (1-\eps)\binom{n}{2}$ (the set~$S$ exists since~$H$ is $(1-\eps, \eps)$-dense). By \cref{prop:large_MC}, the link graph~$H_S$ contains a monochromatic component of order at least $(1-3\sqrt{\eps})n$. The monochromatic tight component~$K$ of~$H$ that contains the edges in $\{e \cup S \colon e \in H_S\}$ has the desired property.
\end{proof}

\subsection{\texorpdfstring{Proof of \cref{lem:main}}{Proof of the main lemma}}

\begin{proof}[Proof of $\cref{lem:main}$]
    We define the graph~$G$ whose vertex set~$V(G)$ is the set of monochromatic tight components of~$H$ and $F_1 F_2 \in E(G)$ if and only if the monochromatic tight components~$F_1$ and~$F_2$ have different colours and there are edges $e_1 \in F_1$ and $e_2 \in F_2$ such that $\s{e_1 \cap e_2} = k-1$. 
    By \cref{prop:Almost_spanning_MTC}, there exists a monochromatic tight component~$F_*$ of~$H$ that spans at least $(1-3\sqrt{\eps})n$ vertices of~$H$. Without loss of generality assume that~$F_*$ is red.
    \begin{claim} \label{claim:no_edges_far}
        There are at most $2\sqrt{\eps} n^k$ edges of~$H$ that are in monochromatic tight components of~$H$ at distance greater than~$k-1$ from~$F_*$ in~$G$.
    \end{claim}
    \begin{proofclaim}
        We call an edge $e \in H$ \emph{bad} if it is in a monochromatic tight component of~$H$ at distance greater than~$k-1$ from~$F_*$ in~$G$.
        There are at most $(3\sqrt{\eps}n)^k \leq \sqrt{\eps}n^k$ edges of~$H$ that do not contain a vertex spanned by~$F_*$. 
        Thus it suffices to show that each vertex~$v$ that is spanned by~$F_*$ is contained in at most $\sqrt{\eps}n^{k-1}$ bad edges of~$H$. 
        Let~$v$ be a vertex that is spanned by~$F_*$ and let $e = vx_1 \dots x_{k-1} \in F_*$. 
        Since~$H$ is $(1-\eps, \eps)$-dense, for each $1 \leq j \leq k-2$, there are at most $\eps \binom{n}{j}n^{k-j-1} \leq \eps n^{k-1}$ many edges $f = vy_1 \dots y_{k-1}$ of~$H$ containing~$v$ such that $e_j(f) = v y_1 \dots y_{j} x_{j+1} \dots x_{k-1} \notin H$. Hence for all but at most $(k-2) \eps n^{k-1} \leq \sqrt{\eps}n^{k-1}$ many edges~$f$ of~$H$ that contain~$v$, we have that $e e_1(f) \dots e_{k-2}(f) f$ is a tight pseudo-walk of length~$k$ from~$e$ to~$f$ in~$H$ which implies that~$f$ is not bad. Hence~$v$ is contained in at most $\sqrt{\eps}n^{k-1}$ many bad edges of~$H$, as desired.
    \end{proofclaim}
    Let $\mathcal{G}_1 = \{F_*\}$ and for each $2 \leq i \leq k$, let
    \[
        \mathcal{G}_i = \left( \bigcup_{F \in \mathcal{G}_{i-1}} N_G(F)\right) \setminus \bigcup_{j \in [i-1]} \mathcal{G}_j \quad\text{ and }\quad \mathcal{E}(\mathcal{G}_i) = \bigcup_{F \in \mathcal{G}_i} F,
    \]
    that is, $\mathcal{G}_i$ is the set of monochromatic tight components of~$H$ that are at distance~$i-1$ from~$F_*$ in~$G$ and $\mathcal{E}(\mathcal{G}_i)$ is the set of edges of~$H$ that are in one of these monochromatic tight components. 
    Note that, since~$F_*$ is red, if~$i$ is odd then the monochromatic tight components in $\mathcal{G}_i$ and all edges in $\mathcal{E}(\mathcal{G}_i)$ are red (and if~$i$ is even they are blue). Let $W_0 = V(H)$. For $i = 1, \dots, k$ in turn, let~$M_i$ be a maximal matching in~$H[W_{i-1}] \cap \mathcal{E}(\mathcal{G}_i)$ and let $W_i = W_{i-1} \setminus V(M_i)$. Observe that
    \begin{equation} \label{eq:no_edges}
        H[W_i] \cap \bigcup_{j \in [i]} \mathcal{E}(\mathcal{G}_j) = \varnothing \quad \text{for each } i \in [k].
    \end{equation}

    \begin{claim} \label{claim:W_k_small}
        We have $\s{W_{k}} \leq \eta n$.
    \end{claim}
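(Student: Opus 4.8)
The plan is to argue by contradiction: I would suppose $|W_k| > \eta n$ and show that $H[W_k]$ then contains more edges than \cref{claim:no_edges_far} permits. The first step uses \eqref{eq:no_edges} with $i = k$, which gives $H[W_k] \cap \bigcup_{j\in[k]}\mathcal{E}(\mathcal{G}_j) = \varnothing$. Since $\mathcal{G}_j$ is exactly the set of monochromatic tight components at distance $j-1$ from $F_*$ in $G$, the union $\bigcup_{j\in[k]}\mathcal{E}(\mathcal{G}_j)$ is precisely the set of edges of $H$ lying in a monochromatic tight component at distance at most $k-1$ from $F_*$. Hence every edge of $H[W_k]$ lies in a monochromatic tight component at distance greater than $k-1$ from $F_*$ (or in one not joined to $F_*$ at all), i.e.\ is a \emph{bad} edge in the terminology of \cref{claim:no_edges_far}, so $|E(H[W_k])| \le 2\sqrt{\eps}\,n^k$.

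The second step is to contradict this bound by showing $H[W_k]$ is dense. I would use the $(1-\eps,\eps)$-density of $H$ at level $k-1$: all but at most $\eps\binom{n}{k-1}$ sets $S \in \binom{V(H)}{k-1}$ satisfy $d_H(S) \ge (1-\eps)n$, and for such a good set $S$ with $S\subseteq W_k$ at least $(1-\eps)n - (n - |W_k|) \ge |W_k| - \eps n \ge (\eta-\eps)n$ of its neighbours lie in $W_k$. Since $W_k$ contains at least $\binom{|W_k|}{k-1} - \eps\binom{n}{k-1} \ge \tfrac12\binom{\eta n}{k-1}$ good $(k-1)$-subsets (using $\eps \ll \eta \ll 1/k$ to make $\eps\binom{n}{k-1}$ negligible next to $\binom{\eta n}{k-1}$), a double count of the pairs $(S,e)$ with $S$ a good $(k-1)$-subset of $W_k$ and $e\in H[W_k]$ with $S\subseteq e$ — each edge being counted at most $k$ times — gives $|E(H[W_k])| \ge \tfrac{1}{2k}\binom{\eta n}{k-1}(\eta-\eps)n \ge c_k\,\eta^k n^k$ for a constant $c_k > 0$ depending only on $k$ and $n$ large.

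Combining the two steps: since $\eps \ll \eta \ll 1/k$ we may assume $2\sqrt{\eps} < c_k\eta^k$, so $c_k\eta^k n^k \le |E(H[W_k])| \le 2\sqrt{\eps}\,n^k < c_k\eta^k n^k$, a contradiction; hence $|W_k| \le \eta n$. I do not expect a genuine obstacle here: all the structural content is already supplied by \eqref{eq:no_edges} and \cref{claim:no_edges_far}, and what remains is only the routine bookkeeping of constants in the hierarchy (that $\eps\binom{n}{k-1}$ is negligible next to $\binom{\eta n}{k-1}$ and that $2\sqrt{\eps} < c_k\eta^k$), which holds because $c_k\eta^k$ depends only on $k$ and $\eta$, both of which sit above $\eps$. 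If one wished to avoid naming $c_k$ altogether, it suffices to observe directly that $\binom{\eta n}{k-1}\cdot \eta n \gg \sqrt{\eps}\,n^k$.
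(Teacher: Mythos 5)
Your proposal is correct and follows essentially the same two-step argument as the paper: first, \eqref{eq:no_edges} with $i=k$ together with \cref{claim:no_edges_far} bounds $|E(H[W_k])|$ above by $2\sqrt{\eps}\,n^k$; second, the $(1-\eps,\eps)$-density of $H$ at level $k-1$ gives a lower bound on $|E(H[W_k])|$ that forces $|W_k| \le \eta n$. The only cosmetic differences are that you frame the density step as a proof by contradiction rather than as a direct bound and you explicitly divide by $k$ in the double count (which the paper's displayed inequality elides, harmlessly, since only the order of magnitude matters).
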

    \begin{proofclaim}
        Since~$H[W_{k}]$ does not contain any edges in $\bigcup_{j \in [k]} \mathcal{E}(\mathcal{G}_j)$,~$H[W_{k}]$ contains only edges of monochromatic tight components that are at distance greater than~$k-1$ from~$F_*$ in~$G$. Hence, by \cref{claim:no_edges_far}, we have that $\s{H[W_{k}]} \leq 2\sqrt{\eps} n^k$. Since~$H$ is $(1-\eps, \eps)$-dense, we have 
        \[
            \s{H[W_{k}]} \geq \left(\binom{\s{W_{k}}}{k-1}-\eps \binom{n}{k-1}\right)(\s{W_{k}} - \eps n) \geq \frac{\s{W_{k}}^k}{k^k} - 2\eps n^k.
        \]
        It follows that $\s{W_{k}} \leq (3 \sqrt{\eps} k^k n^k)^{1/k} \leq \eta n$.
    \end{proofclaim}
    
    \begin{claim} \label{claim:One_MTC}
        For each $i \in [k]$,~$M_i$ contains only edges from one monochromatic tight component or $\s{W_{i-1}} \leq 2 \eps n$.
    \end{claim}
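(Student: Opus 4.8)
I will follow the proof sketch given just before \cref{prop:tight_walk_bridge}. The case $i=1$ is immediate, since $\mathcal{E}(\mathcal{G}_1)=F_*$ is a single monochromatic tight component and hence so is every edge of $M_1$. So fix $i$ with $2\le i\le k$ and assume $\s{W_{i-1}}>2\eps n$; I will show that $M_i$ uses edges from a single monochromatic tight component. Since $F_*$ is red, all edges of $\mathcal{E}(\mathcal{G}_i)$ (in particular all edges of $M_i\subseteq H[W_{i-1}]\cap\mathcal{E}(\mathcal{G}_i)$) share a common colour, namely red if $i$ is odd and blue if $i$ is even. I treat the case where this colour is red; the other case is identical with the colours swapped, using the colour-reversed form of \cref{lem:blue_tight_walk_2}.

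Suppose for a contradiction that $M_i$ contains two edges $e_1,e_2$ that lie in distinct red tight components of~$H$. Since $e_1,e_2\in H[W_{i-1}]$ and $\s{W_{i-1}}>2\eps n$, \cref{prop:tight_walk_bridge} gives a tight pseudo-walk $P=g_1\dots g_{2k}$ from $e_1$ to $e_2$ all of whose edges lie in $H[W_{i-1}]$. On the other hand, $\bigcup_{j\in[i]}\mathcal{E}(\mathcal{G}_j)$ is tightly connected: every component of $\mathcal{G}_j$ with $j\ge 2$ is $G$-adjacent to a component of $\mathcal{G}_{j-1}$, i.e.\ has an edge sharing $k-1$ vertices with an edge of that component, so iterating this shows that every edge of $\bigcup_{j\in[i]}\mathcal{E}(\mathcal{G}_j)$ is joined to an edge of $F_*$ by a tight pseudo-walk inside $\bigcup_{j\in[i]}\mathcal{E}(\mathcal{G}_j)$. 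Hence there is a tight pseudo-walk $P'=h_1\dots h_s$ from $e_2$ to $e_1$ with all edges in $\bigcup_{j\in[i]}\mathcal{E}(\mathcal{G}_j)$, and $s\ge 2$ because $e_1\ne e_2$. Concatenating $P$ and $P'$ gives a closed tight pseudo-walk $Q=f_1\dots f_m$ with $m=2k+s-1$, where $f_j=g_j$ for $j\le 2k$ and $f_{2k+\ell}=h_{\ell+1}$ for $\ell\in[s-1]$; in particular $f_1=e_1$ and $f_{2k}=e_2$ lie in different red tight components and $1<2k<m$.

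Now \cref{lem:blue_tight_walk_2} applied to $Q$ yields a blue edge $f_a$ among $f_2,\dots,f_{2k-1}$ — all of which are edges of $P$, hence of $H[W_{i-1}]$ — that lies in the same blue tight component as some blue edge $f_b$ among $f_{2k+1},\dots,f_m$ — all of which are edges of $P'$, hence lie in $\bigcup_{j\in[i]}\mathcal{E}(\mathcal{G}_j)$. Since $f_b$ is blue while every edge of $\mathcal{E}(\mathcal{G}_i)$ is red, we get $f_b\in\bigcup_{j\in[i-1]}\mathcal{E}(\mathcal{G}_j)$, so the (unique) blue tight component containing $f_b$ belongs to $\bigcup_{j\in[i-1]}\mathcal{G}_j$; as $f_a$ lies in that same component, $f_a\in\bigcup_{j\in[i-1]}\mathcal{E}(\mathcal{G}_j)$. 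But $f_a\in H[W_{i-1}]$, so $f_a\in H[W_{i-1}]\cap\bigcup_{j\in[i-1]}\mathcal{E}(\mathcal{G}_j)$, which is empty by \eqref{eq:no_edges} applied with index $i-1$ (valid since $2\le i\le k$). This contradiction proves the claim.

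The only real obstacle is careful bookkeeping: tracking the colour of $\mathcal{E}(\mathcal{G}_i)$, making sure the blue edge $f_b$ produced by \cref{lem:blue_tight_walk_2} genuinely avoids $\mathcal{E}(\mathcal{G}_i)$ (so that it falls into $\bigcup_{j\le i-1}\mathcal{E}(\mathcal{G}_j)$), and checking that \eqref{eq:no_edges} is available at index $i-1$, which is why one first disposes of $i=1$. All the genuine content sits inside \cref{lem:blue_tight_walk_2}; the rest is just the concatenation of a pseudo-walk inside $H[W_{i-1}]$ with one inside $\bigcup_{j\in[i]}\mathcal{E}(\mathcal{G}_j)$.
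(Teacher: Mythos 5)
Your proof is correct and follows essentially the same route as the paper: obtain a pseudo-walk $P$ inside $H[W_{i-1}]$ via \cref{prop:tight_walk_bridge}, a pseudo-walk $P'$ inside $\bigcup_{j\in[i]}\mathcal{E}(\mathcal{G}_j)$ via tight connectedness, concatenate into a closed pseudo-walk, apply \cref{lem:blue_tight_walk_2} to extract an opposite-coloured edge of $P$ lying in a component of $\bigcup_{j\in[i-1]}\mathcal{G}_j$, and contradict \eqref{eq:no_edges}. You merely spell out a few details (the index bookkeeping of the concatenation and the tight connectedness of $\bigcup_{j\in[i]}\mathcal{E}(\mathcal{G}_j)$) that the paper leaves implicit.
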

    \begin{proofclaim}
        For $i =1$, this trivially holds. Let $2 \leq i \leq k$ and suppose that $\s{W_{i-1}} > 2 \eps n$. Let~$c$ be the colour of the monochromatic tight components in $\mathcal{G}_i$ and let~$\tilde{c}$ be the opposite colour. Suppose for a contradiction that there exist two edges~$e_1$ and~$e_2$ in~$H[W_{i-1}]$ that are in different monochromatic tight components of~$\mathcal{G}_i$. 
        By \cref{prop:tight_walk_bridge}, there exists a tight pseudo-walk~$P$ in~$H[W_{i-1}]$ from~$e_1$ to~$e_2$. 
				Moreover, by the definition of the $\mathcal{G}_i$, there exists a tight pseudo-walk~$P'$ from~$e_2$ to~$e_1$ that only uses edges in $\bigcup_{j \in [i]} \mathcal{E}(\mathcal{G}_j)$. Note that $Q = P P'$ is a closed tight pseudo-walk. By \cref{lem:blue_tight_walk_2}, we have that there exists an edge~$e$ in~$P$ of colour~$\tilde{c}$ that is in the same monochromatic tight component as an edge in~$P'$. 
        Since all edges of $P'$ are in $\bigcup_{j \in [i]} \mathcal{E}(\mathcal{G}_j)$ and all edges in $\mathcal{E}(\mathcal{G}_i)$ are of colour~$c$, we have that~$e$ is in $\bigcup_{j \in [i-1]} \mathcal{E}(\mathcal{G}_j)$. 
        Therefore, $e \in P \cap \bigcup_{j \in [i-1]} \mathcal{E}(\mathcal{G}_j) \subseteq H[W_{i-1}] \cap \bigcup_{j \in [i-1]} \mathcal{E}(\mathcal{G}_j)$, a contradiction to \cref{eq:no_edges}.
    \end{proofclaim}
    Let $i^* \in [k]$ be minimal such that $\s{W_{i^*}} \leq \eta n$ and let $M = \bigcup_{j \in [i^*]} M_i$. By \cref{claim:W_k_small},~$i^*$ exists. Since $\eps \ll \eta$, by \cref{claim:One_MTC},~$M$ contains only edges from $i^* \leq k$ monochromatic tight components. Moreover, $\s{V(H) \setminus V(M)} = \s{W_{i^*}} \leq \eta n$. Hence~$M$ is the desired matching.
\end{proof}

\section{\texorpdfstring{Proof of \cref{thm:main}}{Proof of the main theorem}} \label{section:end}

To prove \cref{thm:main} we use the following hypergraph version of {\L}uczak's connected matching method that reduces the problem of finding monochromatic tight cycles in $2$-edge-coloured complete $k$-graphs to finding matchings in monochromatic tight components in $2$-edge-coloured almost complete $k$-graphs.

\begin{corollary}[{\cite[Corollary~20]{Lo2023}}] 
\label{cor:matchings_to_cycles}
Let $1/n \ll 1/m \ll \eps \ll \eta \ll \gamma, 1/k,1/s$ with $k \ge 3$. 
Suppose that every $2$-edge-coloured $(1-\eps, \eps)$-dense $k$-graph~$H$ on~$m$ vertices contains a matching in~$H$ that covers all but at most $\eta m$ vertices of~$H$ and only contains edges from at most~$s$ monochromatic tight components of~$H$.
Then any $2$-edge-coloured complete $k$-graph on~$n$ vertices contains~$s$ vertex-disjoint monochromatic tight cycles covering at least $(1-\gamma) n$ vertices.
\end{corollary}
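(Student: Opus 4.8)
The plan is to run the hypergraph analogue of {\L}uczak's connected matching method sketched after \cref{thm:main}; this is \cite[Corollary~20]{Lo2023}, so I outline rather than execute it. Let $K$ be a $2$-edge-coloured complete $k$-graph on $n$ vertices. First I would apply the Regular Slice Lemma of Allen, B\"ottcher, Cooley and Mycroft~\cite{Allen2017} to the red subgraph $K^{\red}$. Since any regular slice for a $k$-graph is simultaneously a regular slice for its complement, the resulting equipartition into clusters $V_1,\dots,V_t$, with $1/n\ll 1/t\ll\eps$, is regular for both $K^{\red}$ and $K^{\blue}$. The at most $\eps n$ vertices lying outside $\bigcup_i V_i$ are discarded and charged to the final error term.

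Next I would form the reduced $k$-graph $\mathcal R$ on vertex set $[t]$: place $\{i_1,\dots,i_k\}\in\mathcal R$ whenever the corresponding $k$-tuple of clusters is regular in the reduced-graph sense, and colour such an edge red if its red density is at least $1/2$ and blue otherwise. By the Regular Slice Lemma all but at most $\eps\binom t k$ of the $k$-subsets of $[t]$ are regular, so a short averaging argument over the $(k-i)$-degrees shows $\mathcal R$ is a $2$-edge-coloured $(1-\eps,\eps)$-dense $k$-graph. Feeding $m$ as the required lower bound for the number of clusters into the Regular Slice Lemma and using that the hypothesis holds for all sufficiently large orders (as \cref{lem:main} does), one applies it to $\mathcal R$ to obtain a matching $M$ in $\mathcal R$ covering all but at most $\eta t$ vertices and using edges from at most $s$ monochromatic tight components $C_1,\dots,C_s$ of $\mathcal R$.

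The core step is, for each $r\in[s]$, to convert the ``connected matching'' $M\cap C_r$ into a single monochromatic tight cycle of $K$ covering almost all vertices of the clusters met by $M\cap C_r$. Here I would use two standard regularity tools: (i) a regular $k$-tuple of clusters whose colour-$c$ density is bounded away from $0$ contains a colour-$c$ tight path covering all but $o(1)$ of its vertices, with essentially prescribed ``ends'' (ordered $(k-1)$-tuples in consecutive clusters); and (ii) if edges $e,e'\in\mathcal R$ of the same colour satisfy $\s{e\cap e'}=k-1$, then $K$ contains a short monochromatic tight path crossing from the clusters of $e$ to those of $e'$ and using only $o(n)$ vertices. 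Since each $C_r$ is tightly connected, one threads a long tight path through the clusters of each edge of $M\cap C_r$, links consecutive ones along a tight pseudo-walk inside $C_r$ using tool (ii), and closes the result into one cycle. Because all edges of $C_r$ have the same colour and tight pseudo-walks stay inside $C_r$, this cycle lies entirely in $K^{\red}$ or entirely in $K^{\blue}$.

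Finally I would bound the uncovered set. The $s$ cycles produced are vertex-disjoint (the $C_r$ may be taken to use disjoint clusters, and the connecting paths use negligibly many vertices), and together they miss only the discarded exceptional vertices, the vertices in the at most $\eta t$ clusters untouched by $M$, and an $o(n)$ fraction of each used cluster lost to divisibility and to the connecting paths; as $\eps\ll\eta\ll\gamma$ this totals at most $\gamma n$. The step I expect to be the main obstacle is this connected-matching-to-cycle conversion in the tight setting: unlike for graphs, tight paths carry ordered $(k-1)$-tuple ends, so gluing them requires controlling which ends a regular $k$-tuple realises and routing correctly through the $k-1$ clusters shared by consecutive reduced edges along the tight pseudo-walk, all while keeping the extra vertex usage $o(n)$. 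This is precisely the machinery developed in~\cite{Allen2017} and~\cite{Lo2023}.
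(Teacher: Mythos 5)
The paper does not prove \cref{cor:matchings_to_cycles}; it cites it verbatim as \cite[Corollary~20]{Lo2023}. Your outline correctly reconstructs the intended argument (Regular Slice Lemma, reduced $(1-\eps,\eps)$-dense hypergraph $\mathcal R$, the matching hypothesis applied to $\mathcal R$, and conversion of each connected matching into a long monochromatic tight cycle via regularity-based path embedding and pseudo-walk connections), which is exactly the {\L}uczak-type machinery the paper sketches in its introduction and defers to~\cite{Lo2023,Allen2017}.

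One small imprecision worth flagging: the $s$ tight components $C_1,\dots,C_s$ need not live on disjoint sets of clusters, and the connecting pseudo-walks inside $C_r$ may pass through clusters used by another $C_{r'}$. Vertex-disjointness of the final cycles instead comes from the fact that $M$ is a matching (so the edges of $M\cap C_r$ touch pairwise disjoint cluster sets for different $r$) together with reserving, up front, a small slice of each cluster for the $o(n)$ vertices consumed by connecting paths across all $s$ cycles. With that caveat, your sketch matches the cited proof.
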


Now \cref{thm:main} follows immediately from \cref{lem:main} and \cref{cor:matchings_to_cycles}. 

We remark that it is easy to see from the proofs of \cref{lem:main} and \cref{cor:matchings_to_cycles} that in \cref{thm:main} we could additionally require that at most $\lceil k/2 \rceil$ of the~$k$ monochromatic tight cycles have the same colour.

\section*{Acknowledgments}
The research leading to these results was supported by EPSRC, grant no. EP/V002279/1 and EP/V048287/1 (Allan Lo).
There are no additional data beyond that contained within the main manuscript.
This project has received partial funding from the European Research
Council (ERC) under the European Union's Horizon 2020 research and innovation programme (grant agreement no.\ 786198, Vincent Pfenninger). This research was also supported in part by FWF 10.55776/I6502 (Vincent Pfenninger). For the purpose of open access, the authors have applied a CC BY public copyright licence to any Author Accepted Manuscript version arising from this submission.

\bibliographystyle{abbrv}
\bibliography{bibliography}

\end{document}